\newtheorem{theorem}{Theorem}
\newtheorem{lemma}[theorem]{Lemma}
\newtheorem{corollary}[theorem]{Corollary}
\newtheorem{question}[theorem]{Question}
\theoremstyle{definition}
\newtheorem{remark}[theorem]{Remark}
\newcommand{\F}{{\mathbb F}}
\newcommand{\Q}{{\mathbb Q}}
\newcommand{\R}{{\mathbb R}}
\newcommand{\Z}{{\mathbb Z}}
\newcommand{\Frob}{\text{Frob}}
\definecolor{cite}{rgb}{0.30,0.60,1.00}
\newcommand{\diana}[1]{{\color{blue} \sf $\clubsuit\clubsuit\clubsuit$ Diana: [#1]}}
\title{Weil polynomials of abelian varieties over finite fields}
\author{Michael Cerchia, Zeyu Liu, Diana Mocanu, Haodong Yao, Jing Ye}
\date{}
\begin{document}

\newtheorem{Def}{Definition}[subsection]
\newtheorem{thm}[Def]{Theorem}

\newtheorem{eg}[Def]{Example}
\newtheorem{cor}[Def]{Corollary}
\newtheorem{rmk}[Def]{\normalfont\textit{Remark}}
\newtheorem{prop}[Def]{Proposition}
\newtheorem{propdef}[Def]{Proposition-Definition}
\newtheorem{lem}[Def]{Lemma}

\newtheorem{asp}[Def]{Assumption}
\newtheorem{app}[Def]{Application}
\newtheorem{prob}[Def]{Problem}

\newtheorem{cjt}[Def]{Conjecture}

\newtheorem{rev}{Review}
\newenvironment{pf}{{\noindent\it Proof.}\quad}{\hfill $\qed$\par}

\newcommand{\rad}[1]{\operatorname{rad}(#1)}
\newcommand{\supp}[1]{\operatorname{supp}(#1)}
\newcommand{\spec}[1]{\operatorname{Spec}(#1)}
\newcommand{\Spec}{\operatorname{Spec}}
\newcommand{\spl}{\textnormal{spec}}
\newcommand{\mspec}[1]{\operatorname{MaxSpec}(#1)}

\newcommand{\z}[1]{\mathbb{Z}/#1\mathbb{Z}}
\newcommand{\dis}{\displaystyle}
\newcommand{\mor}[2]{\operatorname{Hom}({#1},{#2})}
\newcommand{\obj}[1]{\operatorname{obj}(#1)}
\newcommand{\homm}[3]{\operatorname{Hom}_{#1}({#2},{#3})}

\newcommand{\udl}[1]{\underline{#1}}
\newcommand{\sat}{\textnormal{sat}}

\newcommand{\fra}[1]{\operatorname{Frac}(#1)}
\newcommand{\im}{\operatorname{im}}
\newcommand{\conj}[1]{\overline{#1}}
\newcommand{\cok}{\operatorname{coker}}
\newcommand{\img}{\operatorname{Im}}
\newcommand{\coim}{\operatorname{coim}}

\newcommand{\inlim}{\varprojlim}
\newcommand{\dlim}{\varinjlim}

\newcommand{\eps}{\varepsilon}

\renewcommand{\AA}{\mathbb{A}}
\newcommand{\CC}{\mathbb{C}}
\newcommand{\EE}{\mathbb{E}}
\newcommand{\FF}{\mathbb{F}}
\newcommand{\GG}{\mathbb{G}}
\newcommand{\HH}{\mathbb{H}}
\newcommand{\NN}{\mathbb{N}}
\newcommand{\QQ}{\mathbb{Q}}
\newcommand{\RR}{\mathbb{R}}
\newcommand{\KK}{\mathbb{K}}
\newcommand{\LL}{\mathbb{L}}
\newcommand{\ZZ}{\mathbb{Z}}
\newcommand{\bfm}{\mathbf{m}}
\newcommand{\mcA}{\mathcal{A}}
\newcommand{\mcB}{\mathcal{B}}
\newcommand{\mcC}{\mathcal{C}}
\newcommand{\mcD}{\mathcal{D}}
\newcommand{\mcF}{\mathcal{F}}
\newcommand{\mcG}{\mathcal{G}}
\newcommand{\mcH}{\mathcal{H}}
\newcommand{\mcL}{\mathcal{L}}
\newcommand{\mcI}{\mathcal{I}}
\newcommand{\mcJ}{\mathcal{J}}
\newcommand{\mcM}{\mathcal{M}}
\newcommand{\mcN}{\mathcal{N}}
\newcommand{\mcO}{\mathcal{O}}
\newcommand{\mcP}{\mathcal{P}}
\newcommand{\mcU}{\mathcal{U}}
\newcommand{\mcS}{\mathcal{S}}
\newcommand{\mcQ}{\mathcal{Q}}
\newcommand{\mcZ}{\mathcal{Z}}
\newcommand{\mfa}{\mathfrak{a}}
\newcommand{\mfA}{\mathfrak{A}}
\newcommand{\mfb}{\mathfrak{b}}
\newcommand{\mfB}{\mathfrak{B}}
\newcommand{\mfC}{\mathfrak{C}}
\newcommand{\mfD}{\mathfrak{D}}
\newcommand{\mfF}{\mathfrak{F}}
\newcommand{\mff}{\mathfrak{f}}
\newcommand{\mfj}{\mathfrak{j}}
\newcommand{\mfI}{\mathfrak{I}}
\newcommand{\mfM}{\mathfrak{M}}
\newcommand{\mfm}{\mathfrak{m}}
\newcommand{\mfN}{\mathfrak{N}}
\newcommand{\mfn}{\mathfrak{n}}
\newcommand{\mfo}{\mathfrak{o}}
\newcommand{\mfO}{\mathfrak{O}}
\newcommand{\mfP}{\mathfrak{P}}
\newcommand{\mfQ}{\mathfrak{Q}}
\newcommand{\mfR}{\mathfrak{R}}
\newcommand{\mfS}{\mathfrak{S}}
\newcommand{\mfT}{\mathfrak{T}}
\newcommand{\mfU}{\mathfrak{U}}
\newcommand{\mfV}{\mathfrak{V}}
\newcommand{\mfW}{\mathfrak{W}}
\newcommand{\mfX}{\mathfrak{X}}
\newcommand{\mfY}{\mathfrak{Y}}
\newcommand{\mfZ}{\mathfrak{Z}}
\newcommand{\mfp}{\mathfrak{p}}
\newcommand{\mfq}{\mathfrak{q}}
\newcommand{\mfz}{\mathfrak{z}}
\newcommand{\bfa}{\mathbf{A}}
\newcommand{\bfp}{\mathbf{P}}
\newcommand{\AGL}{\mathbb{A}\GL}
\newcommand{\Qbar}{\overline{\QQ}}
\newcommand{\dmn}{\trianglerighteq}

\newcommand{\abs}[1]{\left|#1\right|}
\newcommand{\pphi}{\varphi}
\newcommand{\upto}[1]{\overset{#1}{\to}}
\newcommand{\res}[2]{\textnormal{res}^{#1}_{#2}}
\newcommand{\op}{\textnormal{op}}
\newcommand{\nat}[2]{\textnormal{Nat}(#1,#2)}
\newcommand{\comr}{\textnormal{\textbf{ComRings}}}
\newcommand{\Mod}{\textnormal{\textbf{Mod}}}
\newcommand{\lmod}{\sideset{_R}{}{\mathop{\Mod}}}
\newcommand{\rmod}{\sideset{}{_R}{\mathop{\Mod}}}
\newcommand{\mmod}[1]{\sideset{_{#1}}{}{\mathop{\Mod}}}
\newcommand{\set}{\textnormal{\textbf{Sets}}}
\newcommand{\grp}{\textnormal{\textbf{Grp}}}
\newcommand{\Ab}{\textnormal{{\textbf{Ab}}}}
\newcommand{\Top}{\textnormal{\textbf{Top}}}
\newcommand{\en}[2]{\textnormal{End}_{#1}(#2)}
\newcommand{\hgt}[1]{\textnormal{ht}(#1)}
\newcommand{\gr}[2]{\textnormal{gr}_{#1}(#2)}
\newcommand{\grd}[3]{\textnormal{gr}_{#1}^{#2}(#3)}
\newcommand{\br}{\operatorname{Br}}
\newcommand{\ab}{\operatorname{ab}}
\newcommand{\gal}{\operatorname{Gal}}
\newcommand{\rig}{\textnormal{rig}}

\newcommand{\N}{\operatorname{N}}
\newcommand{\h}{\operatorname{H}}
\newcommand{\disc}[1]{\textnormal{disc}({#1})}
\newcommand{\norm}[1]{|\!|#1|\!|}
\newcommand{\bignorm}[1]{\bigg|\!\bigg|#1\bigg|\!\bigg|}
\renewcommand{\mod}{\operatorname{mod}}
\renewcommand{\Re}{\operatorname{Re}}
\renewcommand{\Im}{\operatorname{Im}}
\newcommand{\Gal}{\operatorname{Gal}}
\newcommand{\cov}{\operatorname{Cov}}
\newcommand{\cat}{\operatorname{Cat}}
\renewcommand{\op}{\operatorname{op}}
\newcommand{\eff}{\textnormal{eff}}
\newcommand\nn             {\nonumber \\}

\newcommand{\tdiv}{\operatorname{\mid\!\mid}}

\newcommand\be            {\begin{equation}}
\newcommand\ee            {\end{equation}}
\newcommand\bea           {\begin{eqnarray}}
\newcommand\eea         {\end{eqnarray}}
\newcommand\bnu          {\begin{enumerate}}
\newcommand\enu          {\end{enumerate}}
\newcommand{\Ad}{\operatorname{Ad}}

\newcommand\id            {\mathrm{id}}
\newcommand\ob          {\operatorname{Ob}}
\renewcommand\hom         {\operatorname{Hom}}
\newcommand\ev          {\mathrm{ev}}
\newcommand\coev      {\mathrm{coev}}
\newcommand\edo    {\mathrm{End}}
\newcommand\funend {\EuScript{E}\mathrm{nd}}
\newcommand\aut      {\mathrm{Aut}}
\newcommand\inn      {\mathrm{Inn}}
\newcommand\out      {\mathrm{Out}}
\newcommand\Aut      {\mathcal{A}\mathrm{ut}}
\newcommand\hilb   {\mathrm{Hilb}}
\newcommand\vect    {\mathrm{Vect}}
\newcommand\Mat  {\EuScript{M}\mathrm{at}}
\renewcommand{\div}{\operatorname{div}}
\newcommand\rep     {\mathrm{Rep}}
\newcommand\fun     {\mathrm{Fun}}
\newcommand\Fun    {\EuScript{F}\mathrm{un}}
\newcommand\LMod  {\mathrm{LMod}}
\newcommand\RMod  {\mathrm{RMod}}
\newcommand\BMod {\mathrm{BMod}}
\newcommand\bk       {\mathbb{k}}
\newcommand\forget  {\mathbf{f}}

\newcommand{\sep}{\textnormal{sep}}

\newcommand\alg     {\EuScript{A}\mathrm{lg}}
\newcommand\cTop {\mathrm{Top}}
\newcommand\mfd    {\EuScript{M}\mathrm{fd}}
\newcommand\Set    {\EuScript{S}\mathrm{et}}
\newcommand\sSet    {\mathbf{sSet}}
\newcommand\Topo    {\EuScript{T}\mathrm{op}}
\newcommand\Ring  {\EuScript{R}\mathrm{ing}}
\newcommand\abel {\EuScript{A}\mathrm{bel}}
\newcommand\cring {\EuScript{C}\mathrm{ring}}

\newcommand\one    {\mathbf{1}}
\newcommand\coker  {\mathrm{Coker}}
\newcommand\image     {\mathrm{Im}}

\newcommand{\auto}[1]{\operatorname{Aut}(#1)}
\newcommand{\inv}[1]{\operatorname{Inv}(#1)}
\newcommand{\ch}[1]{\operatorname{char}(#1)}
\newcommand{\colim} {\varinjlim}
\newcommand{\Lim}  {\varprojlim}
\newcommand{\Lan}  {\mathrm{Lan}}
\newcommand{\Ran}  {\mathrm{Ran}}
\newcommand{\Lie}{\operatorname{Lie}}
\newcommand{\BB}{\mathbb{B}}
\newcommand{\Max}{\operatorname{Max}}
\renewcommand{\sp}{\operatorname{Sp}}
\newcommand{\End}{\operatorname{End}}
\renewcommand{\Mat}{\operatorname{Mat}}

\newcommand{\mot}{\textnormal{-}\mathfrak{Mot}}
\newcommand{\andm}{\textnormal{-}\mathfrak{AndMod}}
\newcommand{\abm}{\textnormal{-}\mathfrak{AbMod}}
\newcommand{\rk}{\operatorname{rk}}
\newcommand{\Frac}{\operatorname{Frac}}

\newcommand\CA           {\EuScript{A}}
\newcommand\CB           {\EuScript{B}}
\newcommand\CCC           {\EuScript{C}}
\newcommand\CDD           {\EuScript{D}}
\newcommand\CE          {\EuScript{E}}
\newcommand\CF          {\EuScript{F}}
\newcommand\CG         {\EuScript{G}}
\newcommand\CH         {\EuScript{H}}
\newcommand\CI           {\EuScript{I}}
\newcommand\CJ           {\EuScript{J}}
\newcommand\CK         {\EuScript{K}}
\newcommand\CL          {\EuScript{L}}
\newcommand\CM          {\EuScript{M}}
\newcommand\CN         {\EuScript{N}}
\newcommand\CO         {\EuScript{O}}
\newcommand\CP         {\EuScript{P}}
\newcommand\CQ         {\EuScript{Q}}
\newcommand\CR         {\EuScript{R}}
\newcommand\CS         {\EuScript{S}}
\newcommand\CT         {\EuScript{T}}
\newcommand\CU        {\EuScript{U}}
\newcommand\CV        {\EuScript{V}}
\newcommand\CW        {\EuScript{W}}
\newcommand\CX         {\EuScript{X}}
\newcommand\CY         {\EuScript{Y}}
\newcommand\CZ         {\EuScript{Z}}
\newcommand{\plim}{\varprojlim}

\newcommand{\fA}{\mathfrak{A}}
\newcommand{\fB}{\mathfrak{B}}
\newcommand{\sF}{\mathscr{F}}
\newcommand{\sO}{\mathscr{O}}
\newcommand{\sG}{\mathscr{G}}
\newcommand{\sH}{\mathscr{H}}
\newcommand{\sI}{\mathscr{I}}
\newcommand{\sN}{\mathscr{N}}
\newcommand{\sR}{\mathscr{R}}
\newcommand{\sP}{\mathscr{P}}
\newcommand{\uparr}[1]{\stackrel{\to}{#1}}

\newcommand{\SL}{\operatorname{SL}}
\newcommand{\GL}{\operatorname{GL}}
\newcommand{\sgn}{\operatorname{sgn}}
\newcommand{\ksym}[2]{\left(\frac{#1}{#2}\right)}
\maketitle
\begin{abstract}
    In this paper, we investigate Weil polynomials and their relationship with isogeny classes of abelian varieties over finite fields. 
    We give a necessary condition for a degree $12$ polynomial with integer coefficients to be a Weil polynomial. 
     Moreover, we provide explicit criteria that determine when a Weil polynomial of degree $14$ occurs as the characteristic polynomial of a Frobenius endomorphism acting on an abelian variety. 
\end{abstract}

\section{Introduction}\label{intro}
Let $A$ be an abelian variety of dimension $g$ over a finite field $\FF_q$, where $q = p^n$ is a prime power. Let $T_\ell(A)=\varprojlim A[\ell^n]$ be the $\ell$-adic Tate module of $A$ and $V_\ell(A) = T_\ell(A)\otimes_{\ZZ_\ell}\QQ_\ell$. For $\ell\neq p$, the characteristic polynomial of the Frobenius endomorphism $\operatorname{Frob}_A$ of $A$ is defined to be $$\chi_A(t) = \det (\operatorname{Frob}_A-t I\mid V_\ell(A)),$$ which is a monic polynomial of degree $2g$ with integer coefficients that is independent of the choice of the prime $\ell$. Moreover, $\chi_A(t)$ can be written as $$\chi_A(t) = t^{2g}+a_1t^{2g-1}+\cdots+a_gt^g+qa_{g-1}t^{g-1}+\cdots+q^{g-1}a_1t+q^g,$$ 
with $a_i\in\Z$, and its set of roots in $\conj{\mathbb{Q}}$ has the form $\{\omega_1,\cdots,\omega_g,\conj{\omega}_1,\cdots,\conj{\omega}_g\}$,
where $\omega_i$
is a $q$-\textit{Weil number} for $i\in \{1, \dots g\}$.
We recall that a $q$-\textit{Weil number} $\omega$ is an algebraic integer such that $|\sigma(\omega)| = \sqrt{q}$ for any embedding $\sigma: \QQ(\omega)\to \CC$, and a $q$-\textit{Weil polynomial} is a monic polynomial with integer coefficients whose roots are $q$-Weil numbers coming in pairs $\{\omega_i,\conj{\omega}_i\}_{i= 1}^g$. We will often omit the $q$- when the context is clear. In particular, the characteristic polynomial $\chi_A(t)$ of Frobenius is a Weil polynomial. 

A standard result due to Tate asserts that the characteristic polynomial is an isogeny invariant. More precisely,
given two abelian varieties $A,B$ defined over $\F_q$, Theorem $1$ in \cite{tate} states that $A$ is $\F_q$-isogenous to $B$ if and only if $\chi_A(t)=\chi_B(t)$. Further, any abelian variety can be decomposed up to isogeny into simple abelian varieties (i.e., those with no non-zero proper abelian subvarieties) and the characteristic polynomial is compatible with this decomposition. To see this, let $A$ be an abelian variety defined over $\F_q$. Then $A$ is isogenous to a product 
\[
A \sim A_1^{r_1} \times \ldots \times A_m^{r_m},
\]
where each $A_i$ is a simple abelian variety over $\F_q$ such that $A_i \not\sim A_j$ for $i\neq j $, and $r_i \geq 1$ is a positive integer. If $\chi_{A_i}$ is the characteristic polynomial of the Frobenius endomorphism of $A_i$, we have that 
\[
\chi_A(t)=\chi_{A_1}(t)^{r_1}\ldots\chi_{A_m}(t)^{r_m}.
\]
Therefore understanding $\chi_A(t)$ for abelian varieties $A$ over finite fields of dimension $g$ reduces to understanding $\chi_A(t)$ of simple abelian varieties $A$ of $\dim(A)\leq g$. An essential property of $\chi_A(t)$ for $\F_q$-simple abelian varieties $A$ over $\F_q$ is that 
\[
\chi_A(t)=m_A(t)^e
\]
where $m_A(t)$ is an irreducible polynomial and $e\geq 1$ an integer. We call $e$ \textit{the multiplicity} of $A$ and we note that $e\mid 2\dim(A)$.
In this paper, we study the following two questions.
\begin{question}\label{Q1}
    Under what conditions is a monic polynomial of degree $2g$ with integer coefficients a Weil polynomial?
\end{question}
In Section \ref{sec:Q1}, we partially address this question for $g=6$ by showing the size of the coefficients of a given Weil polynomial must respect certain constraints.
 For dimensions $g=1$ and $g=2$ see Rück \cite{Rück1990} and \cite{Waterhouse1969}, respectively. Haloui \cite{Hal2010} addressed dimension $3$, while Haloui and Singh \cite{HS2012} tackled dimension $4$. Progress on the general problem so far has been summarised by Hayashida \cite{hay2}. The case $g=5$ was done by Sohn \cite{sohn5}.
 The first steps of $g=3,4$ and $5$ contained some mistakes, which were later corrected by Lin in his master's thesis \cite{Lin}. In a recent preprint, Marseglia polished these results in \cite{marseglia2025}.
 Next, we answer: 
\begin{question}\label{Q2}
    Under what conditions does a given Weil polynomial of degree $2g$ occur as the
characteristic polynomial of Frobenius for a simple abelian variety of dimension $g$ over a given finite field?
\end{question}

In Section \ref{sec:Q2}, we address this question for dimension \( g = 7 \). The solution for $g=1$ was established in \cite{Waterhouse1969}. For dimension $g=2$, the results can be found in \cite{Rück1990} and \cite{MNH2002}, while dimensions $g=3$ and $g=4$ are discussed in \cite{Hal2010}, \cite{xing}, and \cite{HS2012}, respectively. The case for $g=5$ is covered in \cite{hay1}, and $g=6$ in \cite{sohn2020}.
We answer Question~\ref{Q2} by giving sufficient and necessary conditions on the $p$-adic valuations on the coefficients of the given Weil polynomial. Our method involves classifying Newton Polygons and could be generalized in theory to any prime dimension $g$.  

\section{Acknowledgements} We thank the organizers of the 2024 Arizona Winter School on Abelian Varieties, where this project originated. In particular, we thank Valentijn Karemaker for suggesting this project and for carefully reading our draft and making many helpful suggestions. We also thank Soumya Sankar for her mentorship and Kiran Kedlaya for giving us an unlimited CoCalc account, where many computations for this project were carried out. We are grateful to Stefano Marseglia for his useful comments and suggestions.
During the preparation of the project, the second author was partially supported by NSF DMS-2053473 under Professor Kedlaya. The third author was involved in writing this article while transitioning from the University of Warwick to the Max Planck Institute for Mathematics in Bonn, and thus she is thankful to both institutions for their kind support.

\section{Question \ref{Q1} for \texorpdfstring{$\boldsymbol{g=6}$}{g=6}}\label{sec:Q1}
Let $\chi(t)\in \Z[t]$ be a degree $12$ polynomial of the form 
\begin{equation}\label{chi1}
	\chi(t) = t^{12}+a_1t^{11}+a_2t^{10}+\cdots +a_6t^6+qs_{5}t^{5}+\cdots+q^{5}a_1t+q^6.
\end{equation}
In this section, we give explicit bounds on the coefficients $a_i\in \ZZ$ when $\chi(t)$ is a Weil polynomial.

We firstly transform the conditions for $\chi(t)$ being a Weil polynomial into conditions for two associated real-coefficient polynomials $f$ and $\tilde{f}$ of degree $6$ having only real positive roots. By Rolle's theorem, if $f(t)$ has all roots positive and real, so does its derivative $f'(t)$. 

The work of Lin \cite[Lemma 10.1]{Lin}
provides necessary and sufficient conditions for a degree $5$ polynomial to have all roots positive real, and we apply this result to $f'(t)$ and $\tilde{f}'(t)$. 
He derives this result while studying Question~\ref{Q2} for $g=5$ using the so-called  Robinson’s method; see \cite[Chapter 7]{Lin} for more details.
In his case, the derivative is of degree $4$ and explicit radical expressions for the roots can be found. However, in our case, $f'(t)$ is of degree $5$, and it is well-known from standard Galois theory that in general there are no radical solutions for degree $5$ polynomials. Consequently, this method only gives conditions for $a_1,\dots,a_5$. To complete the resolution, we use the trivial Weil bound for $a_6$; see Remark \ref{rmk:bounds}.

\subsection{The case of real roots.}
Suppose first that $\chi(t)$ is a Weil polynomial with real roots. We will show that this case can be reduced to cases of lower-degree polynomials, which are already covered in the literature. Thus, we omit the details.

We start by recalling the fact that Weil polynomials have 
even multiplicity at real roots; see for example, Corollary $4.8$ in \cite{Lin} whose proof is valid for any Weil polynomial. If $\chi(t)$ has a real root and $\sqrt{q}$ is a square in the integers, then $\chi(t)$ is a Weil polynomial if and only if
\[
\chi(t)=(t+\sqrt{q})^2\widetilde{\chi}(t)\quad\text{ or }\quad \chi(t)=(t-\sqrt{q})^2\widetilde{\chi}(t),
\]
where $\widetilde{\chi}(t)$ is a Weil polynomial of degree $10$. 

In the case where $q$ is not a square, we know that the minimal polynomial of $\pm\sqrt{q}$ is $t^2-q$. So if $\sqrt{q}$ or $-\sqrt{q}$ is a root of $\chi(t)$ then $t^2-q$ must divide $\chi(t)$, and hence
$$\chi(t)=(t^2-q)^2\tilde{\chi}(t),$$
where $\widetilde{\chi}(t)$ is a Weil polynomial of degree $10$ once again.

We refer to \cite[Main Theorem C]{marseglia2025} for a complete resolution of Weil polynomials $\widetilde{\chi}(t)$ of degree $10$.
\subsection{The case of no real roots.}
Suppose now that $\chi(t)$ is a Weil polynomial with no real roots.
Therefore, its set of roots consists of complex numbers of the form $\{\omega_1,\bar{\omega}_1,\cdots,\omega_6,\bar{\omega}_6\}$ and with absolute value $\sqrt{q}$. For $1\leq i\leq 6$, we let $x_i = -(\omega_i+\bar{\omega}_i)$. Then $\chi(t)$ is given by 
\begin{equation}\label{chi2}
    \chi(t) = \prod_{i=1}^{6}(t^2+x_it+q).
\end{equation}
The first key observation (see \cite[Proposition 6.1.]{Lin}) is that an integer polynomial $\chi(t)$ of the form given in \eqref{chi1} is a Weil polynomial if and only if the polynomials $f(t) = \prod_{i=1}^6(t - (2\sqrt{q}+x_i))$ and $\tilde{f}(t) = \prod_{i=1}^6(t-(2\sqrt{q}-x_i))$ have only real positive roots.

For $1\leq i\leq 6$, let $v_i$ denote the $i$-th symmetric function of $x_i$'s. By comparing coefficients of \eqref{chi1} and \eqref{chi2}, one gets that

$$\begin{aligned}
	&v_1 = a_1,\\
	&v_2 = a_2- 6q,\\
	&v_3 = a_3 - 5qa_1,\\
	&v_4 = a_4 - 4qa_2+9q^2,\\
	& v_5 =  a_5 - 3qa_3 + 5q^2a_1,\\
	& v_6 =  a_6 - 2qa_4 +2q^2a_2-2q^3.
\end{aligned}$$
Let $r_i$ and $\tilde{r}_i$ be the $i$-th coefficients of $f(t)$ and $\tilde{f}(t)$:  
\begin{equation}\label{eq:f}
f(t) = t^6+r_1t^5+r_2t^4+r_3t^3+r_4t^2+r_5t+r_6,
\end{equation}
and 
\begin{equation}\label{eq:tildef}
    \tilde{f}(t) = t^6+\tilde{r}_1t^5+ \tilde{r}_2t^4+\tilde{r}_3t^3+\tilde{r}_4t^2+\tilde{r}_5t+\tilde{r}_6.
\end{equation}
Then we have 
$$\begin{aligned}
	&r_1 = -12\sqrt{q}-a_1,\\
	&r_2 =  54q+10\sqrt{q}a_1+a_2,\\
	&r_3 =  -112q\sqrt{q}-35qa_1-8\sqrt{q}a_2-a_3,\\
	&r_4 = 105q^2+50q\sqrt{q}a_1+20qa_2+6\sqrt{q}a_3+a_4, \\
	& r_5 =  -36q^2\sqrt{q}-25q^2a_1-16q\sqrt{q}a_2-9qa_3-4\sqrt{q}a_4-a_5, \\
	& r_6 =  2q^3+2q^2\sqrt{q}a_1+2q^2a_2+2q\sqrt{q}a_3+2qa_4+2\sqrt{q}a_5+a_6,
\end{aligned}$$
 and $$\begin{aligned}
 	&\tilde{r}_1 = -12\sqrt{q}+a_1,\\
 	&\tilde{r}_2 =  54q-10\sqrt{q}a_1+a_2,\\
 	&\tilde{r}_3 =  -112q\sqrt{q}+35qa_1-8\sqrt{q}a_2+a_3,\\
 	&\tilde{r}_4 = 105q^2-50q\sqrt{q}a_1+20qa_2-6\sqrt{q}a_3+a_4, \\
 	& \tilde{r}_5 =  -36q^2\sqrt{q}+25q^2a_1-16q\sqrt{q}a_2+9qa_3-4\sqrt{q}a_4+a_5, \\
 	& \tilde{r}_6 =  2q^3-2q^2\sqrt{q}a_1+2q^2a_2-2q\sqrt{q}a_3+2qa_4-2\sqrt{q}a_5+a_6.
 \end{aligned}$$
The next lemma gives necessary conditions for an integer polynomial $f$ of degree $6$ to have only positive real roots. More precisely, it gives bounds on the first $5$ coefficients of $f$. We prove it by using the analogous result for degree $5$ given in \cite[Corollary 3.6]{marseglia2025} on $f'$.
 
 \begin{lemma}\label{lem:ineq}
    Let $f(t) = t^6+r_1t^5+r_2t^4+r_3t^3+r_4t^2+r_5t+r_6$. If $f(t)$ has only positive and real roots, then the following holds
    \begin{enumerate}
        \item[(1)] $r_1<0$;
        \item[(2)] $r_2>0$ and $r_2\leq \frac{5}{12}r_1^2$;
        \item[(3)] $r_3<0$ and $-\frac{1}{3}r_1^2\sqrt{25r_1^2 - 60r_2}+\frac{1}{225}(25r_1^2 - 60r_2)^{\frac{3}{2}}+\frac{4}{5}r_2\sqrt{25r_1^2 - 60r_2}\leq \frac{10}{9}r_1^3 -4r_1r_2+6r_3\leq \frac{1}{3}r_1^2\sqrt{25r_1^2 - 60r_2}-\frac{1}{225}(25r_1^2 - 60r_2)^{\frac{3}{2}}-\frac{4}{5}r_2\sqrt{25r_1^2 - 60r_2}$;
        \item[(4)] $r_4>0$ and $L_1\leq r_4\leq R_1$;
        \item[(5)] $r_5<0$ and $L_2\leq r_5\leq R_2$;
    \end{enumerate}
    where $L_1,L_2, R_1,R_2$ are defined in the proof.
\end{lemma}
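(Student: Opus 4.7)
The strategy is to apply Rolle's theorem and reduce to the already-settled degree $5$ case. If $f(t)$ has six positive real roots (counted with multiplicity), then by interlacing $f'(t)$ has five positive real roots, so the normalized derivative
\[
g(t) := \tfrac{1}{6}f'(t) = t^5 + \tfrac{5r_1}{6}t^4 + \tfrac{2r_2}{3}t^3 + \tfrac{r_3}{2}t^2 + \tfrac{r_4}{3}t + \tfrac{r_5}{6}
\]
is a monic degree $5$ polynomial with only positive real roots. The plan is to apply the necessary conditions in \cite[Corollary 3.6]{marseglia2025} to $g$ and translate each back to a condition on $r_1,\dots,r_5$.

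The sign assertions in (1)-(5) are immediate from Vieta's formulas: the coefficients of a monic polynomial with only positive real roots alternate in sign. The bound $r_2 \leq \tfrac{5}{12}r_1^2$ in (2) comes from the degree-$5$ Newton-type inequality $c_2 \leq \tfrac{2}{5}c_1^2$ applied to the coefficients $c_1 = \tfrac{5r_1}{6}$ and $c_2 = \tfrac{2r_2}{3}$ of $g$, clearing denominators. Similarly, the two-sided bound in (3) is exactly the degree-$5$ inequality bounding $c_3 = \tfrac{r_3}{2}$ in terms of $c_1, c_2$, scaled by the appropriate factor.

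For (4) and (5), the degree-$5$ corollary provides two-sided bounds on $c_4 = \tfrac{r_4}{3}$ in terms of $c_1, c_2, c_3$, and on $c_5 = \tfrac{r_5}{6}$ in terms of $c_1, c_2, c_3, c_4$. These are obtained by Robinson's method: one further applies Rolle's theorem to $g$, so that $g'$ is a quartic with positive real roots; the extremal monic polynomial having the given first few coefficients is pinned down by evaluating $g$ at the critical points of $g'$ (equivalently, the roots of $f''/12$), which are computable by radicals via the quartic formula. We then define $L_1, R_1$ (resp.\ $L_2, R_2$) to be $3$ (resp.\ $6$) times the resulting bounds, rewritten in terms of $r_1, r_2, r_3$ (resp.\ $r_1, \dots, r_4$).

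The main obstacle is expository rather than conceptual: the explicit closed forms for $L_1, R_1, L_2, R_2$ are built from the radical expressions for the roots of the quartic $f''(t)/12$, evaluated in polynomial expressions of moderate degree in those roots, and the resulting formulas are bulky. No new inequality beyond \cite[Corollary 3.6]{marseglia2025} is needed; the remainder of the argument is a careful substitution and bookkeeping exercise, which we will carry out one item at a time.
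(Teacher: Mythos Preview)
Your proposal is correct and follows essentially the same route as the paper: normalize $f'$ to the monic quintic $g$, invoke \cite[Corollary~3.6]{marseglia2025} (and the accompanying Proposition~3.5) for the degree-$5$ bounds, and substitute $g_i \mapsto r_i$ to obtain (1)--(5), with $L_1,R_1,L_2,R_2$ defined through the radical expressions for the roots of the quartic $g'$. One small slip: where you write ``critical points of $g'$'' you mean the critical points of $g$ (equivalently the roots of $g'$, i.e.\ of $f''$), as your parenthetical already indicates.
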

\begin{proof}
By Rolle's theorem, if $f(t)$ has only positive and real roots, then so do all of its derivatives. 
In particular, if we let
 $$g(t) :=\frac{1}{6}f'(t) = t^5+ \frac{5r_1}{6}t^4+\frac{2r_2}{3}t^3+ \frac{r_3}{2}t^2+\frac{r_4}{3}t+\frac{r_5}{6},$$ then $g(t)$ is a monic polynomial with only positive and real roots.
We denote by 
\begin{equation}\label{eq:giri}
    g_1 := \frac{5r_1}{6},\: g_2 := \frac{2r_2}{3},\: g_3 := \frac{r_3}{2},\: g_4 := \frac{r_4}{3},\: g_5 :=\frac{r_5}{6}.
\end{equation}

We will now apply \cite[Corollary 3.6]{marseglia2025} on $g(t)$ to get bounds on its coefficients $g_i$ for $i=1,\dots,5$ which in turn will give bounds on $r_i$ for $i=1,\dots,5$.
Firstly, we need to define the following quantities \begin{align*}
    u_2 &:= \frac{3g_2}{10} - \frac{3g_1^2}{25}=- \frac{1}{12} r_1^2 + \frac{1}{5} r_2, \\
    u_3 &:= \frac{2g_1^3}{125} - \frac{3g_1 g_2}{50} + \frac{g_3}{10}=\frac{1}{108} r_1^3 - \frac{1}{30} r_1 r_2 + \frac{1}{20} r_3, \\
    u_4 &:= -\frac{3g_1^4}{625} + \frac{3g_1^2 g_2}{125} - \frac{2g_1 g_3}{25} + \frac{g_4}{5}=- \frac{1}{432} r_1^4 + \frac{1}{90} r_1^2 r_2 - \frac{1}{30} r_1 r_3 + \frac{1}{15} r_4, \\
\end{align*}

Define
    \[ S \coloneqq \left\{-\zeta^{k} \omega \left( \frac{9 u_3}{2} + \frac{3\sqrt{\Delta}}{2}\right) - \zeta^{-k} \overline{\omega} \left( \frac{9 u_3}{2} - \frac{3\sqrt{\Delta}}{2} \right)  + \frac{2 u_2^2}{3} : 0\leq k \leq 2 \right\}, \]
    where $\Delta \coloneqq u_3^2 + \frac{4}{27} u_2^3$, $\zeta$ is a primitive third root of unity and $\omega$ is a third root of $ \frac{1}{2} (- u_3 + \sqrt{\Delta})$.
By \cite[Corollary 3.6]{marseglia2025}, if all the roots of $g(x)$ are real or (1)--(4) hold then the set $S$ consists of real numbers, which we denote by $\theta_1 \leq \theta_2 \leq \theta_3$.
If \( u_3 = 0 \), let \( x_{i_1, i_2} \) be
$$x_{i_1, i_2} := i_1 \sqrt{-u_2 + i_2 \sqrt{u_2^2 - u_4}},$$
for \( i_1, i_2 \in \{+1,-1\} \). Otherwise, define
$$v_2 := -\frac{u_2^2}{3} - u_4,$$
$$v_3 := \frac{2 u_2 u_4}{3} - \frac{2 u_2^3}{27} - 2 u_3^2,$$
$$\text{$C$ is a third root of }
        \begin{cases}
            -v_3 & \text{if $v_2=0$},\\
            \frac{1}{2}\left(- v_3 + \sqrt{v_3^2 + \frac{4}{27} v_2^3}\right) & \text{if $v_2\neq 0$},
        \end{cases}$$
$$y \coloneqq C - \frac{v_2}{3C} - \frac{2 u_2}{3}
$$
and let \( x_{i_1, i_2} \) for \( i_1, i_2 \in \{+1,-1\} \) be defined by
$$x_{i_1, i_2} := \frac{i_1 \sqrt{2y} + i_2 \sqrt{-4u_2-2y-i_1\frac{8u_3}{\sqrt{2y}}}}{2}.$$

Then, by \cite[Proposition 3.5]{marseglia2025}, $x_{i_1, i_2} - \frac{g_4}{5}$, for $i_1, i_2 \in \{+1, -1 \}$ are the four roots of $g'(x)$.
    Assume that they are real, sort them as $\beta_4 \leq \beta_3 \leq \beta_2 \leq \beta_1$, set
    \begin{align*}
        \lambda_1 & \coloneqq \max\left\lbrace \beta_1^4+g_2 \beta_1^3+g_3\beta_1^2+g_4\beta_1, 
        \beta_3^4+g_2\beta_3^3+g_3\beta_3^2+g_4\beta_3 \right\rbrace \\
        \lambda_2 & \coloneqq \min\left\lbrace \beta_2^4+g_2\beta_2^3+g_3\beta_2^2+g_4\beta_2, 
        \beta_4^4+g_2\beta_4^3+g_3\beta_4^2+g_4\beta_4 \right\rbrace.
    \end{align*}

By using the conclusion of \cite[Proposition 3.5, Corollary 3.6]{marseglia2025} applied to $g(t)$ and making the substitutions given in \eqref{eq:giri} we get the conclusion. Here, we define $L_1,R_1,L_2,R_2$ as follows:
$$L_1:=
\frac{5}{144} r_1^4 - \frac{1}{6} r_1^2 r_2 + \frac{1}{2} r_1 r_3
+15\theta_1,
$$
$$
R_1:=\frac{5}{144} r_1^4 - \frac{1}{6} r_1^2 r_2 + \frac{1}{2} r_1 r_3
+15\theta_2,
$$
$$L_2: = -6\lambda_2, \qquad R_2:=-6\lambda_1.$$

\end{proof}
\begin{remark}\label{rmk:bounds}
    Let $\chi(t) = t^{12}+a_1t^{11}+\cdots+a_6t^6+qa_5t^5+\cdots+q^5a_1t+q^6$. By Vieta's relations and the fact that the roots are $q$-Weil numbers, it follows that each $a_i$ satisfies the trivial bound
    \[
    |a_i|\leq \binom{12}{i} (\sqrt{q})^i.
    \]
    Moreover, the equality holds only when $\chi(t)$ all roots in $\R$.
\end{remark}
\begin{corollary}
    Let $\chi(t) = t^{12}+a_1t^{11}+\cdots+a_6t^6+qa_5t^5+\cdots+q^5a_1t+q^6$. Assume it has no real roots. If $\chi(t)$ is a Weil polynomial, then the following holds

    \begin{enumerate}
        \item $-12\sqrt{q}< a_1 <12\sqrt{q}$;
        \item $-54q+10\sqrt{q}|a_1|<a_2\leq 6q +\frac{5}{12}a_1^2$; 
        \item $-35qa_1+8\sqrt{q}a_2-112q\sqrt{q}<a_3<-35qa_1+8\sqrt{q}a_2+112q\sqrt{q}$;
        \item $a_3\geq (-1/27a_1^2 + 4/45a_2 - 8/15q)\sqrt{25a_1^2 - 60a_2 + 360q} - 5/27a_1^3 + 2/3a_1a_2 + q a_1$,\\
        $a_3\leq (1/27a_1^2 - 4/45a_2 + 8/15q)\sqrt{25a_1^2 - 60a_2 + 360q}- 5/27a_1^3 + 2/3a_1a_2 + q a_1 $; 
        \item $2\sqrt{q}|25qa_1+3a_3|-105q^2-20qa_2<a_4$ ;
        \item $L_4\leq a_4\leq R_4$;
        \item $-36q^2\sqrt{q}-25q^2a_1-16q\sqrt{q}a_2-9qa_3-4\sqrt{q}a_4<a_5<36q^2\sqrt{q}-25q^2a_1+16q\sqrt{q}a_2-9qa_3+4\sqrt{q}a_4$; 
        \item $L_5\leq a_5\leq R_5$; 
        \item $-924q^3< a_6 < 924 q^3$. 
    \end{enumerate}
The description of the bounds $L_4,L_5,R_4,R_5$ in 6. and 8. is as follows.
We first define the following quantities: \begin{align*}
    u_2 &:=  -\frac{a_1^2}{12}+\frac{a_2}{5}-\frac{6q}{5},\\
    u_3 &:=\frac{a_1^3}{108}  - \frac{a_1 a_2}{30}  - \frac{a_1q}{20}   + \frac{a_3}{20},  \\
    u_4 &:=  -\frac{a_1^4}{432}+\frac{a_1^2a_2}{90}+\frac{qa_1^2}{10}-\frac{a_1a_3}{30}-\frac{4qa_2}{15}+\frac{a_4}{15}+\frac{3q^2}{5},\\
\end{align*}
Moreover, we define $\theta_1,\: \theta_2, \:\lambda_1,\:\lambda_2$ as in the proof of Lemma~\ref{lem:ineq}, by plugging in the values of $u_1,\:u_2,\:u_3$ above.
Finally, we define
\begin{align*}
L_4 &:= \frac{5}{144}a_1^4 - \frac{1}{6}a_1^2 a_2 - \frac{3}{2}a_1^2 q + \frac{1}{2}a_1 a_3 + 4 a_2 q - 9 q^2 + 15\theta_1, \\
R_4 &:= \frac{5}{144}a_1^4 - \frac{1}{6}a_1^2 a_2 - \frac{3}{2}a_1^2 q + \frac{1}{2}a_1 a_3 + 4 a_2 q - 9 q^2 + 15\theta_2, \\
L_5 &:= -36q^2\sqrt{q} - 25q^2a_1 - 16q\sqrt{q}a_2 - 9q a_3 - 4\sqrt{q}a_4 + 6\lambda_1, \\
R_5 &:= \phantom{-}36q^2\sqrt{q} - 25q^2a_1 + 16q\sqrt{q}a_2 - 9q a_3 + 4\sqrt{q}a_4 - 6\lambda_2.
\end{align*}

\end{corollary}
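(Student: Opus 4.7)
The plan is to derive the corollary by combining Lemma~\ref{lem:ineq} applied in parallel to the two auxiliary polynomials $f(t)$ and $\widetilde{f}(t)$ defined in \eqref{eq:f} and \eqref{eq:tildef}. Indeed, by \cite[Proposition 6.1]{Lin}, the hypothesis that $\chi(t)$ is a Weil polynomial with no real roots is equivalent to $f$ and $\widetilde{f}$ both having only real positive roots, so each of the inequalities (1)--(5) of Lemma~\ref{lem:ineq} holds for the coefficients $r_i$ of $f$ and for the coefficients $\widetilde{r}_i$ of $\widetilde{f}$. I then substitute the explicit formulas expressing $r_i$ and $\widetilde{r}_i$ in terms of $a_1,\dots,a_5$ and $q$ (given just before Lemma~\ref{lem:ineq}) and intersect the resulting constraints to produce bounds on each $a_i$.

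The linear inequalities (1), (2), and the sign conditions in (3)--(5) for $f$ and $\widetilde{f}$ translate directly into the bounds labeled 1, 2, 3, 5, and 7. For instance, combining $r_1<0$ with $\widetilde{r}_1<0$ yields $-12\sqrt{q}<a_1<12\sqrt{q}$; combining $r_2>0$ with $\widetilde{r}_2>0$ gives $a_2 > -54q+10\sqrt{q}|a_1|$; and combining $r_2\leq \frac{5}{12}r_1^2$ with $\widetilde{r}_2\leq \frac{5}{12}\widetilde{r}_1^2$ collapses (after cancellation of $\sqrt{q}\,a_1$ cross terms) to $a_2\leq 6q+\tfrac{5}{12}a_1^2$. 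The symmetric pair of inequalities in condition (3) of Lemma~\ref{lem:ineq} produces the two-sided bound on $a_3$ in item 4 after noting that the quantity $25r_1^2-60r_2$ evaluates identically to $25\widetilde{r}_1^2-60\widetilde{r}_2=25a_1^2-60a_2+360q$ in both cases, so the two instances combine into a single symmetric interval.

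For items 6 and 8, the bounds $L_4,R_4,L_5,R_5$ are obtained by writing out $L_1,R_1,L_2,R_2$ from Lemma~\ref{lem:ineq} with $r_i$ expressed in the $a_i,q$ variables, and observing that the auxiliary quantities $u_2,u_3,u_4$ of the lemma then become the expressions listed in the statement of the corollary. In this step the instance coming from $\widetilde{f}$ gives an analogous bound, and one takes the tighter of the two; as the expressions are polynomial in $r_i$ of moderate degree, the substitution is mechanical once the formulas for $u_2,u_3,u_4$ are verified. Finally, item 9 is immediate from Remark~\ref{rmk:bounds}: the trivial Weil bound gives $|a_6|\leq \binom{12}{6}q^3=924q^3$, with strict inequality because equality only occurs when all roots of $\chi(t)$ are real, which is excluded by assumption.

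The main obstacle will be the bookkeeping for items 6 and 8: the definitions of $\theta_1,\theta_2,\lambda_1,\lambda_2$ require computing cube roots of sextic expressions in $a_1,a_2,a_3,q$, and one must check that the sign conventions and min/max choices for the two instances ($f$ and $\widetilde{f}$) consistently yield the claimed intervals. Apart from that, the proof is essentially a translation between two coordinate systems, so once the algebraic identities $u_i(f)=u_i(\widetilde{f})=$ the expressions in the corollary are verified, the statement follows from Lemma~\ref{lem:ineq} and Remark~\ref{rmk:bounds}.
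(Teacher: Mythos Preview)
Your proposal is correct and follows the same approach as the paper: apply Lemma~\ref{lem:ineq} to both $f$ and $\widetilde f$, substitute the explicit expressions for the $r_i$ and $\widetilde r_i$ in terms of $a_1,\dots,a_5,q$, and use Remark~\ref{rmk:bounds} for $a_6$. Your write-up in fact supplies more of the intermediate algebra (e.g.\ the verification that $25r_1^2-60r_2=25\widetilde r_1^2-60\widetilde r_2$) than the paper's two-line proof, but the strategy is identical.
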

\begin{proof}
The bounds on $a_i$ with $0\leq i\leq5$ come from Lemma~\ref{lem:ineq} applied to the polynomials \eqref{eq:f} and \eqref{eq:tildef}.
For $a_6$, we use the trivial bound given in Remark \ref{rmk:bounds}.
   
\end{proof}
\section{Question \ref{Q2} for $g=7$}\label{sec:Q2}
In this section we let $A$ be a simple abelian variety of dimension $g$ over $\F_q$ with $q = p^n$, for a prime $p$ and integer $n\geq 1$. We denote by $\Frob_A$
the Frobenius endomorphism of $A$ and $\chi_A(t)$ the characteristic polynomial
of $\Frob_A$ -- as viewed inside $V_l(A)$, for an $l\neq p$ as described in Section \ref{intro}. We recall that
\begin{equation}
    \chi_A(t)=m_A(t)^e,
\end{equation}
where $m_A(t)$ is an irreducible polynomial, and $e\geq 1$ an integer, called the multiplicity of $A$. From this equality, it is obvious that $e\mid2\dim(A)$. As discussed in the introduction, it is always the case that $\chi_A(t)$ is a Weil polynomial. However, the converse is not always true, and it is covered by Question \ref{Q2}.
In this section, we answer this question for $g=7$ in the following theorem. 
For an integer $a\in \Z$ denote by $v_p(a)$ the $p$-adic valuation of $a$.
\begin{theorem}\label{Q2g7}
    Let $f(t) = t^{14}+ a_1t^{13}+a_2t^{12}+ \ldots +a_2q^5t^2+a_1q^6t+q^7$
be an irreducible Weil polynomial. Then the polynomial $f(t)$ is the
characteristic polynomial of a simple abelian variety of dimension $7$ over $\F_q$ if and only
if one of the following conditions holds
\begin{enumerate}
    \item $v_p(a_1)\geq \frac{1}{2}n, 
        v_p(a_2)\geq n, 
        v_p(a_3)\geq \frac{3}{2}n, 
        v_p(a_4)\geq 2n, 
        v_p(a_5)\geq \frac{5}{2}n, 
        v_p(a_6)\geq 3n, 
        v_p(a_7)\geq \frac{7}{2}n$ and $f(t)\text{ has no root of valuation }\frac{1}{2}n\text{ nor irreducible factors of degree }3\text{ or }5\text{ or }7\text{ in }\mathbb{Q}_p[t];$
    \item $v_p(a_1)=0, v_p(a_2)\geq \frac{1}{2}n,
        v_p(a_3)\geq n , 
        v_p(a_4)\geq \frac{3}{2}n , 
        v_p(a_5)\geq 2n , 
        v_p(a_6)\geq \frac{5}{2}n , 
        v_p(a_7)\geq 3n$ and $f(t)\text{ has no root of valuation }\frac{1}{2}n\text{ nor irreducible factors of degree }3\text{ or }5\text{ in }\mathbb{Q}_p[t];$
    \item $v_p(a_1)=0, v_p(a_2)\geq \frac{1}{3}n,
        v_p(a_3)\geq \frac{2}{3}n,
        v_p(a_5)\geq \frac{3}{2}n , 
        v_p(a_4)=n,
        v_p(a_6)\geq 2n , 
        v_p(a_7)\geq \frac{5}{2}n$
        and $f(t)\text{ has no root of valuation }\frac{1}{3}n,\frac{1}{2}n\text{ or }\frac{2}{3}n\text{ and has exactly }2\text{ degree }3\text{ irreducible factors in }\mathbb{Q}_p[t];$
    \item  $v_p(a_1)=0, v_p(a_2)\geq \frac{1}{4}n , 
        v_p(a_3)\geq \frac{1}{2}n , 
        v_p(a_4)\geq \frac{3}{4}n , 
        v_p(a_5)=n,
        v_p(a_6)\geq \frac{3}{2}n , 
        v_p(a_7)\geq 2n$ and $f(t)\text{ has no root of valuation }\frac{1}{4}n,\frac{1}{2}n\text{ or }\frac{3}{4}n\text{ and has at most }2\text{ degree }2\text{ irreducible factors in }\mathbb{Q}_p[t];$
    \item $v_p(a_1)=0, v_p(a_2)\geq \frac{2}{5}n , 
        v_p(a_3)\geq \frac{4}{5}n , 
        v_p(a_4)\geq \frac{6}{5}n , 
        v_p(a_5)\geq \frac{8}{5}n , 
        v_p(a_6)= 2n,
        v_p(a_7)\geq \frac{5}{2}n$ and $f(t)\text{ has no root of valuation }\frac{2}{5}n,\frac{1}{2}n\text{ or }\frac{3}{5}n\text{ nor degree }3\text{ irreducible factors in }\mathbb{Q}_p[t];$
    \item $v_p(a_1)=0, v_p(a_2)\geq \frac{1}{5}n , 
        v_p(a_3)\geq \frac{2}{5}n , 
        v_p(a_4)\geq \frac{3}{5}n , 
        v_p(a_5)\geq \frac{4}{5}n , 
        v_p(a_6)=n,
        v_p(a_7)\geq \frac{3}{2}n$ and $f(t)\text{ has no root of valuation }\frac{1}{5}n,\frac{1}{2}n\text{ or }\frac{4}{5}n\text{ nor degree }3\text{ irreducible factors in }\mathbb{Q}_p[t];$
    \item $v_p(a_1)=0, v_p(a_2)\geq \frac{1}{3}n , 
        v_p(a_3)\geq \frac{2}{3}n , 
        v_p(a_4)\geq n , 
        v_p(a_5)\geq \frac{4}{3}n , 
        v_p(a_6)\geq \frac{5}{3}n,
        v_p(a_7)=2n$ and $f(t)\text{ has no root of valuation }\frac{1}{3}n\text{ or }\frac{2}{3}n\text{ nor degree }2\text{ irreducible factors in }\mathbb{Q}_p[t];$
    \item $v_p(a_1)=0, v_p(a_2)\geq \frac{1}{6}n , 
        v_p(a_3)\geq \frac{1}{3}n , 
        v_p(a_4)\geq \frac{1}{2}n , 
        v_p(a_5)\geq \frac{2}{3}n , 
        v_p(a_6)\geq \frac{5}{6}n,
        v_p(a_7)=n $ and $f(t)\text{ has no root of valuation $\frac{1}{6}n$ or $\frac{5}{6}n$ nor irreducible factors of degree $2$ or $3$ in }\mathbb{Q}_p[t];$
    \item  $v_p(a_1)\geq 0,
    v_p(a_2)=0,
    v_p(a_3)\geq n/2,
    v_p(a_4)\geq n,
    v_p(a_5)\geq 3n/2,
    vp(a_6)\geq 2n,
    v_p(a_7)\geq 5n/2$ and $f(t)\text{ has no root of valuation } \frac{1}{2}n \text{ nor irreducible factors of degree $3$ or $5$ in }\mathbb{Q}_p[t];$
    \item  $v_p(a_1)\geq 0, 
    v_p(a_2)=0, 
    v_p(a_3)\geq n/3, 
    v_p(a_4)\geq 2n/3, 
    v_p(a_5)\geq n, 
    v_p(a_6)\geq 3n/2, 
    v_p(a_7)\geq 2n$ and $f(t)\text{ has no root of valuation } \frac{1}{2}n, \frac{1}{3}n, \frac{2}{3}n \text{ over }\Q_p[t] $;
    \item  $v_p(a_1)\geq 0, 
    v_p(a_2)=0, 
    v_p(a_3)\geq n/4, 
    v_p(a_4)\geq n/2, 
    v_p(a_5)\geq 3n/4, 
    v_p(a_6)\geq n, 
    v_p(a_7)\geq 5n/4$ and $f(t)\text{ has no root of valuation } \frac{1}{2}n, \frac{3}{4}n, \frac{1}{4}n \text{ and we cannot have more than 3 irreducible factors of degree 2}$;
    \item  $v_p(a_1)\geq 0, 
    v_p(a_2)=0,
    v_p(a_3)\geq \frac{1}{5}n, 
    v_p(a_4)\geq \frac{2}{5}n, 
    v_p(a_5)\geq \frac{3}{5}n, 
    v_p(a_6)\geq \frac{4}{5}n, 
    v_p(a_7)=n$ and $f(t) \text{ has no root of valuation $\frac{1}{5}n$ or $\frac{4}{5}n$ and has no irreducible factors of degree 3 in $\QQ_p[t]$}$;
    \item $ v_p(a_1)\geq 0, 
    v_p(a_2)=0,
    v_p(a_3)\geq \frac{2}{5}n, 
    v_p(a_4)\geq \frac{4}{5}n, 
    v_p(a_5)\geq \frac{6}{5}n, 
    v_p(a_6)\geq \frac{8}{5}n, 
    v_p(a_7)=2n$ and $f(t) \text{ has no root of valuation $\frac{2}{5}n$ or $\frac{3}{5}n$ and has no irreducible factors of degree 3 in $\QQ_p[t]$}$;
    \item  $v_p(a_1)\geq 0 , 
        v_p(a_2)\geq 0 , 
        v_p(a_3) = 0 , 
        v_p(a_4)\geq \frac{1}{2}n , 
        v_p(a_5)\geq n , 
        v_p(a_6)\geq \frac{3}{2}n , 
        v_p(a_7)\geq 2n$ and $f(t)\text{ has no root of valuation $\frac{1}{2}n$ nor irreducible factors of degree $3$ in $\mathbb{Q}_p[t]$}$;
    \item  $v_p(a_1)\geq 0 , 
        v_p(a_2)\geq 0 , 
        v_p(a_3) = 0 , 
        v_p(a_4)\geq \frac{1}{3}n , 
        v_p(a_5)\geq \frac{2}{3}n , 
        v_p(a_6) = n, 
        v_p(a_7)\geq \frac{3}{2}n$ and $f(t)\text{ has no root of valuation $\frac{1}{3}n$, $\frac{1}{2}n$ or $\frac{2}{3}n$ in $\mathbb{Q}_p[t]$}$;
    \item  $v_p(a_1)\geq 0 , 
        v_p(a_2)\geq 0 , 
        v_p(a_3) = 0 , 
        v_p(a_4)\geq \frac{1}{4}n , 
        v_p(a_5)\geq \frac{1}{2}n , 
        v_p(a_6)\geq \frac{3}{4}n , 
        v_p(a_7) = n$ and $f(t) \text{ has no root }\text{of valuation $\frac{1}{4}n$ or $\frac{3}{4}n$ and has at most $2$ irreducible factors of degree $2$ in $\mathbb{Q}_p[t]$}.$
    \item $v_p(a_1)\geq \frac{1}{3}n , 
        v_p(a_2)\geq \frac{2}{3}n , 
        v_p(a_3) = 0 , 
        v_p(a_4)\geq \frac{3}{2}n , 
        v_p(a_5)\geq 2n , 
        v_p(a_6)\geq \frac{5}{2}n , 
        v_p(a_7)\geq 3n$ and $f(t)\text{ has no root of valuation $\frac{1}{3}n$, $\frac{1}{2}n$ or $\frac{2}{3}n$ and has at most $2$ irreducible factors of degree $3$ in $\mathbb{Q}_p[t]$}$;
    \item  $v_p(a_1)\geq \frac{1}{4}n,
    v_p(a_2)\geq \frac{1}{2}n,
    v_p(a_3)\geq \frac{3}{4}n,
    v_p(a_4)=n,
    v_p(a_5)\geq \frac{3}{2}n,
    v_p(a_6)\geq 2n,
    v_p(a_7)\geq \frac{5}{2}n$ and $f(t)$ has no root of valuation $\frac{1}{4}n, \frac{1}{2}n$ or $\frac{3}{4}n$ and has no irreducible factors of degree $3$
    and $f(t)$ has at most $3$ irreducible factors of degree 2 in $\mathbb{Q}_p[t]$;
    \item $v_p(a_1)\geq \frac{1}{4}n, , v_p(a_2)\geq \frac{1}{2}n,, v_p(a_3)\geq \frac{3}{4}n, , v_p(a_4) = n
    v_p(a_5)\geq \frac{4}{3}n,
     v_p(a_6)\geq \frac{5}{3}n,
     v_p(a_7) = 2n$ and $f(t)$ has no root of valuation $\frac{3}{4}n, \frac{2}{3}n,\frac{1}{3}n,\frac{1}{4}n$ in $\QQ_p$, nor a factor of degree 2 in $\QQ_p[t]$.
    \item $v_p(a_1)\geq 0,, v_p(a_2)\geq 0,, v_p(a_3)\geq 0, v_p(a_4)=0, v_p(a_5)\geq \frac{1}{2}n,, v_p(a_6)\geq n,, v_p(a_7)\geq \frac{3}{2}n$ and $f(t)$ has no root of valuation $\frac{1}{2}n$ in $\QQ_p$, nor a factor of degree 3 in $\QQ_p[t]$;
    \item $v_p(a_1)\geq 0, \quad v_p(a_2)\geq 0,\quad v_p(a_3)\geq 0,\quad v_p(a_4) = 0, v_p(a_5)\geq \frac{1}{3}n,
    v_p(a_6)\geq \frac{2}{5}n, v_p(a_7) = n$ and $f(t)$ has no root of valuation $\frac{1}{3}n, \frac{2}{3}n$ in $\QQ_p$;
    \item 
$v_p(a_1)\geq 0  ,
        v_p(a_2)\geq 0  ,
        v_p(a_3)\geq 0  ,
        v_p(a_4)\geq 0  ,
        v_p(a_5)\geq 0  ,
        v_p(a_6)\geq \frac{1}{2}n  ,
        v_p(a_7)\geq n$ and $f(t)\text{ has no root of valuation $\frac{1}{2}n$ in $\mathbb{Q}_p[t]$}$;
    \item $v_p(a_1)\geq \frac{1}{5}n  ,
        v_p(a_2)\geq \frac{2}{5}n  ,
        v_p(a_3)\geq \frac{3}{5}n  ,
        v_p(a_4)\geq \frac{4}{5}n  ,
        v_p(a_5)\geq 0  ,
        v_p(a_6)\geq \frac{3}{2}n  ,
        v_p(a_7)\geq 2n$ and $f(t)\text{ has no root of valuation $\frac{1}{5}n$, $\frac{1}{2}n$ or $\frac{4}{5}n$ nor degree $3$ irreducible factors in $\mathbb{Q}_p[t]$}$;
    \item $ v_p(a_1)\geq \frac{2}{5}n  ,
        v_p(a_2)\geq \frac{4}{5}n  ,
        v_p(a_3)\geq \frac{6}{5}n  ,
        v_p(a_4)\geq \frac{8}{5}n  ,
        v_p(a_5)\geq 0  ,
        v_p(a_6)\geq \frac{5}{2}n  ,
        v_p(a_7)\geq 3n$ and $f(t)\text{ has no root of valuation $\frac{2}{5}n$, $\frac{1}{2}n$ or $\frac{3}{5}n$ nor degree $3$ irreducible factors in $\mathbb{Q}_p[t]$}$;
    \item $v_p(a_1)\geq 0  ,
        v_p(a_2)\geq 0  ,
        v_p(a_3)\geq 0  ,
        v_p(a_4)\geq 0  ,
        v_p(a_5)\geq 0  ,
        v_p(a_6)\geq 0  ,
        v_p(a_7)\geq \frac{1}{2}n$ and $f(t)\text{ has no root of valuation }\frac{1}{2}n\text{ in }\mathbb{Q}_p[t].$
    \item $v_p(a_1)\geq \frac{1}{6}n  ,
        v_p(a_2)\geq \frac{1}{3}n  ,
        v_p(a_3)\geq \frac{1}{2}n  ,
        v_p(a_4)\geq \frac{2}{3}n  ,
        v_p(a_5)\geq \frac{5}{6}n  ,
        v_p(a_6)\geq 0  ,
        v_p(a_7)\geq \frac{3}{2}n$ and $f(t)$ has no root of valuation $\frac{1}{6}n, \frac{1}{2}n$ or $\frac{5}{6}n$ nor irreducible factors of degree $3$
       and $f(t)$ has exactly $1$ irreducible factor of degree 2 in $\mathbb{Q}_p[t]$;
    \item  $v_p(a_1)\geq \frac{1}{3}n  ,
        v_p(a_2)\geq \frac{2}{3}n  ,
        v_p(a_3)\geq n  ,
        v_p(a_4)\geq \frac{4}{3}n  ,
        v_p(a_5)\geq \frac{5}{3}n  ,
        v_p(a_6)\geq 0  ,
        v_p(a_7)\geq \frac{5}{2}n$ and $ f(t) \text{ has no root }\text{of valuation $\frac{1}{3}n, \frac{1}{2}n$ or $\frac{2}{3}n$ and has exactly $1$ irreducible factor of degree 2 in $\mathbb{Q}_p[t]$}$;
    \item  $v_p(a_1)\geq 0  ,
        v_p(a_2)\geq 0  ,
        v_p(a_3)\geq 0  ,
        v_p(a_4)\geq 0  ,
        v_p(a_5)\geq 0  ,
        v_p(a_6)\geq 0  ,
        v_p(a_7)\geq \frac{1}{2}n$;
    \item $v_p(a_1)\geq \frac{1}{7}n ,
        v_p(a_2)\geq \frac{2}{7}n  ,
        v_p(a_3)\geq \frac{3}{7}n  ,
        v_p(a_4)\geq \frac{4}{7}n  ,
        v_p(a_5)\geq \frac{5}{7}n  ,
        v_p(a_6)\geq \frac{6}{7}n  ,
        v_p(a_7)\geq \frac{3}{2}n$ and $f(t) \text{ has no root of valuation $\frac{1}{7}n$ or $\frac{6}{7}n$ nor irreducible factors of degree $2$ and $3$}$;
    \item $v_p(a_1)\geq \frac{2}{7}n  ,
        v_p(a_2)\geq \frac{4}{7}n  ,
        v_p(a_3)\geq \frac{6}{7}n  ,
        v_p(a_4)\geq \frac{8}{7}n  ,
        v_p(a_5)\geq \frac{10}{7}n  ,
        v_p(a_6)\geq \frac{12}{7}n  ,
        v_p(a_7)\geq \frac{5}{2}n$ and $ f(t) \text{ has no root of valuation $\frac{2}{7}n$ or $\frac{5}{7}n$ nor irreducible factors of degree $2$ and $3$};$
    \item $ v_p(a_1)\geq \frac{3}{7}n  ,
        v_p(a_2)\geq \frac{6}{7}n  ,
        v_p(a_3)\geq \frac{9}{7}n  ,
        v_p(a_4)\geq \frac{12}{7}n  ,
        v_p(a_5)\geq \frac{15}{7}n  ,
        v_p(a_6)\geq \frac{18}{7}n  ,
        v_p(a_7)\geq \frac{5}{2}n$ and $ f(t) \text{ has no root of valuation $\frac{3}{7}n$ or $\frac{4}{7}n$ nor irreducible factors of degree $2$ and $3$}.$
\end{enumerate}
\end{theorem}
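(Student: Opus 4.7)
The plan is to apply the Honda--Tate correspondence and then carry out a case analysis organized by the Newton polygon of $f$ at $p$. Because $f(t)$ is irreducible of degree $14$ and $\chi_A(t)=m_A(t)^e$ with $m_A$ irreducible, the irreducibility of $f$ forces $e=1$ and $m_A=f$: the only alternative would make $m_A$ linear, but then $f=(t-\alpha)^{14}$ for $e>1$ is manifestly reducible. Thus the isogeny class of $A$ is determined by a root $\pi$ of $f$, the endomorphism algebra $D:=\operatorname{End}^0(A)$ is a central simple algebra over the CM field $E:=\QQ(\pi)$, and Honda--Tate states that such an $A$ exists if and only if every local Brauer invariant $\operatorname{inv}_v(D)$ vanishes in $\QQ/\ZZ$.

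Because $E$ is totally imaginary and the invariants at finite $v\nmid p$ are zero by Tate's theorem, the criterion reduces to places $v\mid p$, where
$$\operatorname{inv}_v(D) \;\equiv\; \frac{v(\pi)}{v(q)}\,[E_v:\QQ_p]\pmod{\ZZ}.$$
Setting $\lambda_v:=v(\pi)/v(q)$, the condition becomes $\lambda_v\,[E_v:\QQ_p]\in\ZZ$ for every $v\mid p$. The slopes $\lambda_v$, weighted by $[E_v:\QQ_p]$, form the Newton polygon of $f$ at $p$, and the self-reciprocal form of a Weil polynomial forces this polygon to be symmetric about the midpoint $(7,7n/2)$. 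Each case in the theorem prescribes, through lower bounds on $v_p(a_i)$ that become equalities at break points, exactly one of the finitely many admissible symmetric Newton polygons of degree $14$ with slopes in $[0,1]$. The auxiliary clauses --- \emph{no root of valuation $\lambda n$}, \emph{at most $k$ irreducible factors of degree $d$}, and so on --- encode the additional restrictions on the $\QQ_p$-factorization type of $f$ needed to make $\lambda_v\,[E_v:\QQ_p]\in\ZZ$ at every place of $E$ above $p$. Conversely, any combination of valuation data and factorization type permitted by one of these cases can be realized by a polynomial in $\QQ[t]$ via standard $p$-adic approximation (Krasner's lemma together with Hensel's lemma and weak approximation), yielding a Weil number $\pi$ whose associated Honda--Tate abelian variety has $f$ as its characteristic polynomial.

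The main obstacle is the combinatorial bookkeeping. One must enumerate the admissible symmetric Newton polygons of a degree-$14$ Weil polynomial (slopes a symmetric multiset in $\QQ\cap[0,1]$ summing to $7$) and, for each, determine the constraints on the degrees of the $\QQ_p$-irreducible factors of $f$ imposed by the Brauer-vanishing condition. The slopes with denominators $5$ or $7$ are the most rigid: if $\lambda_v$ has reduced denominator $d$, then every $\QQ_p$-irreducible factor of $f$ of slope $\lambda_v$ must have degree divisible by $d$, which severely limits the allowed factorization patterns; this accounts for the \emph{no irreducible factors of degree $k$} conditions in the cases involving slopes $\tfrac{1}{5},\tfrac{2}{5},\tfrac{1}{7},\tfrac{2}{7},\tfrac{3}{7}$. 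The slope-$\tfrac{1}{2}$ cases, in contrast, only require each relevant factor to have even degree, and the work there is a careful count of how many factors of each even degree are compatible with the slope multiplicity and the remaining slope data. Once these tables are compiled, each of the listed cases follows directly as a translation of \emph{$\operatorname{inv}_v(D)\in\ZZ$ for every $v\mid p$} into explicit inequalities on the $p$-adic valuations of $a_1,\ldots,a_7$.
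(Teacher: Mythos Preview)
Your approach is essentially the same as the paper's: both reduce the question to the Honda--Tate criterion (the paper invokes it in the pre-packaged form of Theorem~\ref{REF}, namely $v_p(f_i(0))/n\in\ZZ$ for every $\QQ_p$-irreducible factor $f_i$, which is exactly your $\lambda_v[E_v:\QQ_p]\in\ZZ$) and then classify the admissible symmetric Newton polygons of $f$, translating the integrality condition into the listed valuation bounds and factorization constraints case by case.

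One point to correct: your ``Conversely'' paragraph is unnecessary and misdirected. The theorem concerns a \emph{given} irreducible Weil polynomial $f$; both directions of the biconditional follow directly from the Honda--Tate criterion once the Newton-polygon case analysis is done. There is nothing to construct via Krasner, Hensel, or weak approximation---you are not asked to show that each case is nonempty, only that for a given $f$ the criterion holds if and only if $f$ falls under one of the listed cases. Beyond that, what remains (and what the paper does explicitly for Cases~0 and~1, leaving the rest to a figure) is the purely mechanical enumeration of the 31 polygons and the corresponding conditions, which your outline correctly anticipates but does not carry out.
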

Next we proceed to prove the above theorem.
\subsection{Theoretical background}
We collect a few theoretical results from Hayashisa's \cite{hay1} which we will use to prove Theorem \ref{Q2g7}.
\begin{lemma}\label{noreal}
Let $A$ be a simple abelian variety over $\F_q$ with $q = p^n$ elements and $\chi_A(t)$
the characteristic polynomial of $\Frob_A$. Suppose that $\chi_A(t)$ has a real root. Then we have:
\begin{itemize}
    \item if $n$ is even, then $\dim(A) = 1$,
    \item if $n$ is odd, then $\dim(A) = 2$.
\end{itemize}
\end{lemma}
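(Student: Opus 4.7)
The plan is to reduce the problem to Honda-Tate theory and compute local invariants. Since $\chi_A(t)$ is a Weil polynomial, any real root has absolute value $\sqrt{q}$ and is therefore $\pm\sqrt{q}$; because $A$ is simple we have $\chi_A(t) = m_A(t)^e$ with $m_A$ irreducible, so $m_A(t)$ must be the minimal polynomial of $\pm\sqrt{q}$ over $\QQ$. This already splits the argument into two cases depending on whether $\sqrt{q}$ is rational, which is governed precisely by the parity of $n$.

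In each case, I would invoke the Honda-Tate dictionary. Writing $\pi = \Frob_A$, $E := \QQ(\pi)$, and $D := \End^0_{\F_q}(A)$, the dimension of $A$ is determined by
\[
2\dim(A) = \sqrt{[D:E]}\cdot [E:\QQ],
\]
where $\sqrt{[D:E]}$ is the common denominator of the Brauer invariants of $D$ in $\QQ/\ZZ$, which are in turn prescribed by $\pi$. When $n$ is even, $m_A(t) = t \mp p^{n/2}$ is linear, so $E = \QQ$; I would compute the invariants (expecting $1/2$ at $\infty$ because $\pi$ is totally real, $0$ at finite primes away from $p$, and $1/2$ at $p$ forced by the product formula), identify $D$ as the rational quaternion algebra ramified at $\{\infty,p\}$, and conclude $\dim(A) = 1$.

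When $n$ is odd, $m_A(t) = t^2 - q$, so $E = \QQ(\sqrt{p})$ is real quadratic with $p$ totally ramified. Here I would compute Brauer invariant $1/2$ at each of the two real places (again because $\pi$ is totally real) and check that at the unique place $v \mid p$ the invariant is $[E_v:\QQ_p]\cdot v(\pi)/v(q) = 2 \cdot n/(2n) \equiv 0 \pmod{\ZZ}$. Thus the non-trivial contributions come only from the archimedean places, giving local index $2$, so $\sqrt{[D:E]} = 2$ and the formula above returns $\dim(A) = 2$.

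The main obstacle I anticipate is the careful $p$-adic bookkeeping in the odd case: setting up the correct normalization of $v$, computing $v(\pi) = n$ and $v(q) = 2n$ in $E_v$, and confirming that the $p$-adic invariant is integral so that the quaternion structure of $D$ is driven purely by the two real places. Everything else is a direct dimension count from Honda-Tate, and the answer falls out of the invariants.
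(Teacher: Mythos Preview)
Your plan is correct: the real roots force $m_A$ to be $t\mp p^{n/2}$ (when $n$ is even) or $t^2-q$ (when $n$ is odd), and the Honda--Tate invariant computations you outline give local index $2$ in both cases, yielding $\dim(A)=1$ and $\dim(A)=2$ respectively. The paper does not actually prove this lemma---it simply cites Hayashida's Lemma~2.4---so there is no in-paper argument to compare against; your Honda--Tate computation is precisely the standard route and almost certainly what underlies Hayashida's proof as well.
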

\begin{proof}
    This is Lemma 2.4. in \cite{hay1}.
\end{proof}
\begin{corollary}\label{cor:prime}
    Recall that the \textit{multiplicity} of $A$ is the integer $e\geq 1$ satisfying $\chi_A(t)=m_A(t)^e$. Suppose that the dimension of $A$ is a prime number $\ell\geq 3$. Then $e=1$ or $e=l$.
\end{corollary}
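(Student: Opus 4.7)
The plan is to use the divisibility $e \mid 2\dim(A) = 2\ell$ together with Lemma \ref{noreal} to rule out the two unwanted divisors.

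Since $\ell$ is a prime with $\ell \geq 3$, the positive divisors of $2\ell$ are precisely $1,\ 2,\ \ell,\ 2\ell$. So I would start by noting that $e \in \{1,2,\ell,2\ell\}$, and the task reduces to eliminating $e=2$ and $e=2\ell$. The key observation for both eliminations is that $\chi_A(t) = m_A(t)^e$ has a real root if and only if $m_A(t)$ has a real root, so I can apply Lemma \ref{noreal} whenever I can force $m_A$ to have a real root.

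First, I would rule out $e = 2\ell$. In that case $\deg m_A = 1$, so $m_A(t) = t - \alpha$ with $\alpha \in \mathbb{Z}$ a $q$-Weil number, forcing $\alpha = \pm\sqrt{q}$. Integrality of $\alpha$ requires $q$ to be a perfect square, i.e.\ $n$ even. Then $m_A$, and hence $\chi_A$, has the real root $\pm \sqrt{q}$, so Lemma \ref{noreal} gives $\dim(A)=1$, contradicting $\dim(A) = \ell \geq 3$. Next, I would rule out $e = 2$. Here $\deg m_A = \ell$, which is odd, so the irreducible polynomial $m_A \in \mathbb{Z}[t]$ has odd degree and therefore must have at least one real root (complex roots of a real polynomial come in conjugate pairs). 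Again Lemma \ref{noreal} forces $\dim(A) \in \{1,2\}$, contradicting $\dim(A) = \ell \geq 3$.

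The two excluded cases leave only $e = 1$ or $e = \ell$, which proves the corollary. This is entirely formal once Lemma \ref{noreal} is in hand, and I do not anticipate any real obstacle; the only small subtlety to watch is that in the $e=2\ell$ case one must separately observe that integrality of $m_A$ forces $n$ to be even before applying Lemma \ref{noreal}, rather than applying it directly.
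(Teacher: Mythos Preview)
Your proof is correct and follows essentially the same approach as the paper: list the divisors of $2\ell$, and eliminate $e=2$ and $e=2\ell$ by observing that $m_A$ then has odd degree, hence a real root, contradicting Lemma~\ref{noreal}. The paper treats these two cases in one stroke (odd degree $\Rightarrow$ real root) rather than separately, and your ``subtlety'' about $n$ being even in the $e=2\ell$ case is in fact unnecessary: Lemma~\ref{noreal} already gives $\dim(A)\in\{1,2\}$ regardless of the parity of $n$, which is all you need.
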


\begin{proof}
    Since $e$ divides $2\ell$ and $\ell$ is a prime, we have either $e \in \{ 1, 2, \ell, 2\ell\}$. Suppose $e = 2$
or $e = 2\ell$. Then $m_A(t)$ is an irreducible polynomial of odd degree. Hence the polynomial
$m_A(t)$ has a real root. This contradicts $\ell\geq 3$ by Lemma \ref{noreal}.
\end{proof}
We note that $e=\ell$ is covered in the following theorem. So we are left with studying $e=1$, i.e. the irreducible case.
\begin{theorem}
    Let $a, b \in \mathbb{Z}$ and $2 < g \in \mathbb{Z}$. Set $f(t) = (t^2 + at + b)^g \in \mathbb{Z}[t]$. Then the
polynomial $f(t)$ is the characteristic polynomial of a simple abelian variety of dimension
$g$ over $\F_q$ with $q = p^n$ elements if and only if $g$ divides $n$, $b = q$, $|a| < 2\sqrt{q} $ and $a = kq^{s/g}$,
where $k, s$ are integers satisfying $\gcd(k, p) = 1, \gcd(s, g) = 1$ and $1 \leq s < g/2$.
\end{theorem}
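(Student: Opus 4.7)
The plan is to apply Honda-Tate theory, which parametrizes simple abelian varieties over $\F_q$ up to isogeny by Galois conjugacy classes of $q$-Weil numbers $\pi$, with endomorphism algebra $D := \End^0(A)$ a central simple algebra over $E := \Q(\pi)$ whose local invariants at a place $v$ of $E$ are
\[
\text{inv}_v(D) = \begin{cases} 0 & v \text{ complex, or } v \nmid p\infty, \\ [E_v : \Q_p] \cdot v(\pi)/v(q) \pmod{\Z} & v \mid p, \end{cases}
\]
subject to the dimension formula $2\dim A = [E : \Q]\sqrt{[D:E]}$. Since $f(t) = (t^2+at+b)^g$, the minimal polynomial of Frobenius is $m_A(t) = t^2+at+b$ of degree $2$ and the multiplicity is $e = g$, forcing $[D:E] = g^2$. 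Requiring $f$ to be a Weil polynomial yields $b = q$ and $|a| \leq 2\sqrt{q}$; since $g \geq 3$, Lemma~\ref{noreal} precludes real roots, giving $|a| < 2\sqrt{q}$, and hence $E$ is imaginary quadratic with a single (complex) archimedean place of invariant $0$.

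For necessity, I would analyze the $p$-adic invariants. If $p$ were inert or ramified in $E$, there would be a unique place $v$ above $p$ with $[E_v:\Q_p] = 2$, and the global reciprocity $\sum_v \text{inv}_v(D) = 0$ would force $\text{inv}_v(D) = 0$, giving Schur index $1$ and contradicting $e = g \geq 3$. So $p$ splits as $\mathfrak{p}_1\mathfrak{p}_2$; setting $s := v_{\mathfrak{p}_1}(\pi)$, the relation $\pi\bar\pi = p^n$ gives $v_{\mathfrak{p}_2}(\pi) = n - s$, and the two invariants $s/n$ and $(n-s)/n$ must have order exactly $g$ in $\Q/\Z$. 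This is equivalent to $\gcd(s, n) = n/g$, which forces $g \mid n$ and allows the parametrization $s = (n/g)s_0$ with $\gcd(s_0, g) = 1$; by swapping $\mathfrak{p}_1 \leftrightarrow \mathfrak{p}_2$ if necessary, we may assume $1 \leq s_0 < g/2$.

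To recover the closed form for $a$: over $\Q_p$, $m_A(t)$ splits as $(t - \pi_1)(t - \pi_2)$ with $v_p(\pi_1) = s_0 n/g < n - s_0 n/g = v_p(\pi_2)$, so $v_p(a) = v_p(\pi_1 + \pi_2) = s_0 n/g$, and $k := a/p^{s_0 n/g}$ is an integer coprime to $p$. Writing $p^{s_0 n/g} = q^{s_0/g}$ gives $a = k q^{s_0/g}$ with $\gcd(k,p) = \gcd(s_0, g) = 1$ and $1 \leq s_0 < g/2$, as claimed.

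For sufficiency, I would reverse the argument. The Newton polygon of $m_A(t)$ over $\Q_p$ has vertices $(0, n), (1, sn/g), (2, 0)$; the hypothesis $s < g/2$ places the middle vertex strictly below the outer chord, producing two distinct slopes, and hence $m_A$ splits in $\Q_p[t]$ and $p$ splits in $E$. The invariants $s/g$ and $(g-s)/g$ at $\mathfrak{p}_1, \mathfrak{p}_2$ sum to $0$ in $\Q/\Z$ and their orders have lcm equal to $g$ (since $\gcd(s, g) = 1$), so Honda-Tate produces a simple abelian variety $A$ whose endomorphism algebra has center $E$ and $E$-dimension $g^2$, with $\chi_A = m_A^g = f$ and $\dim A = \tfrac{1}{2}[E:\Q] \cdot g = g$. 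The main subtlety will be the careful alignment between the Schur-index condition in Honda-Tate and the two coprimality constraints $\gcd(k, p) = 1$ and $\gcd(s, g) = 1$; a secondary point is verifying that $q^{s/g}$ yields an integral power of $p$ exactly when $g \mid n$, given $\gcd(s, g) = 1$.
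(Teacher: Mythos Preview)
Your argument is correct. Note, however, that the paper does not actually prove this theorem: its entire proof reads ``This is Theorem 1.2.\ in \cite{hay1}.'' So there is no approach in the paper to compare against; you have supplied a full Honda--Tate proof where the paper simply cites Hayashida. Your computation of the local invariants, the reduction to the split case, the identification $\gcd(s,n)=n/g$ forcing $g\mid n$, and the Newton-polygon argument for sufficiency are all sound, and this is essentially the standard route (and presumably close to what \cite{hay1} does). One cosmetic remark: when you write that the invariants ``must have order exactly $g$'', strictly speaking the requirement is that the lcm of the local orders equals $g$; it then follows here because the two orders coincide, both being $n/\gcd(s,n)$.
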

\begin{proof}
    This is Theorem 1.2. in \cite{hay1}.
\end{proof}
\begin{theorem}\label{REF}
    An irreducible Weil polynomial $f(t)$ of degree $2g$ is the characteristic polynomial of a simple abelian variety of dimension $g$ over $\F_q$ (i.e. $e = 1$) if and only if $f(t)$
has no real roots and the following condition holds
\begin{equation}
    \frac{v_p(f_i(0))}{n}\in \Z, \text{ for all }f_i\text{ monic irreducible factors of }f(t)\text{ base changed to } \Q_p.
\end{equation}

\end{theorem}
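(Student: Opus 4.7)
The plan is to invoke Honda--Tate theory to reduce the statement to a computation of local Brauer invariants of the endomorphism algebra of $A$. Letting $\pi$ be a root of $f(t)$ and setting $K := \Q(\pi)$, the endomorphism algebra $D := \End_{\F_q}(A)\otimes_{\Z}\Q$ is a central simple division algebra over $K$ satisfying $[D:K]=e^2$, where $e$ is the multiplicity of $A$. Since $f$ is irreducible of degree $2g$, the condition $\chi_A=f$ is equivalent to $e=1$, hence to $D=K$. By Hasse--Brauer--Noether, this is in turn equivalent to the vanishing of every local invariant $\invr_v(D)$ at every place $v$ of $K$, and Honda--Tate further guarantees that any Weil number $\pi$ with these invariants vanishing is actually realized by such an $A$.

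Next, I would invoke Tate's explicit description of these invariants: at each finite place of $K$ not lying above $p$, the invariant vanishes automatically; at each archimedean place $v$ one has $\invr_v(D)=\tfrac{1}{2}$ if $v$ is real and $0$ if $v$ is complex; and at each place $v\mid p$, one has
\[
\invr_v(D)\equiv \frac{v(\pi)}{v(q)}\,[K_v:\Q_p]\pmod{\Z}.
\]
Thus the archimedean invariants all vanish precisely when $f(t)$ has no real roots, and it remains to translate the $p$-adic invariants into the divisibility condition on $v_p(f_i(0))/n$.

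For this translation, I would use the standard bijection between the monic irreducible factors $f_i$ of $f(t)$ in $\Q_p[t]$ and the places $v\mid p$ of $K$: if $f_i$ corresponds to $v$ with ramification index $e_v$ and residue degree $f_v$, then $[K_v:\Q_p]=e_v f_v$ and $v(q)=n e_v$, giving $\frac{v(\pi)}{v(q)}[K_v:\Q_p]=\frac{f_v\,v(\pi)}{n}$. Since $f_i(0)=\pm N_{K_v/\Q_p}(\pi)$, the identity $v_p\circ N_{K_v/\Q_p}=f_v\cdot v$ (with $v$ normalized on $K_v$) yields $v_p(f_i(0))=f_v\,v(\pi)$, and hence $\invr_v(D)\equiv v_p(f_i(0))/n\pmod{\Z}$. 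Consequently the $p$-adic invariants all vanish exactly when $v_p(f_i(0))/n\in\Z$ for every factor $f_i$, and combining this with the real-root criterion yields the claimed biconditional.

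The only nontrivial ingredient, more a citation than an obstacle, is Tate's formula for $\invr_v(D)$ at places above $p$ together with the Honda--Tate correspondence between conjugacy classes of Weil numbers and isogeny classes of simple abelian varieties; once these are available, the argument is pure bookkeeping with ramification and residue degrees, and I would refer to \cite{hay1} (ultimately going back to Tate and Honda) for the needed inputs.
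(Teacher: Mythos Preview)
Your proposal is correct and follows the same route the paper points to: the paper simply cites this as Corollary~3.2 of \cite{hay1}, which in turn rests on Tate's description of the local invariants of $\End^0(A)$ (Theorem~8 in \cite{WaterhouseMilne1969}) together with Honda--Tate. Your sketch unpacks exactly that argument---Honda--Tate to reduce $e=1$ to the vanishing of all Brauer invariants, Tate's formula to identify the archimedean contribution with the real-root condition and the $p$-adic contribution with $v_p(f_i(0))/n\bmod\Z$---so there is nothing to add.
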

\begin{proof}
    This is Corollary 3.2. in \cite{hay1} which follows from the work of Tate (e.g. Theorem 8 in \cite{WaterhouseMilne1969}).
\end{proof}

\subsection{Proof of Theorem \ref{Q2g7}}

\begin{proof}

Following Hayashida's proof of \cite[Theorem 1.3.]{hay1} we let $f(t)$ be an irreducible Weil polynomial of
degree $14$. It is the characteristic polynomial of a simple abelian variety $A$ of dimension $7$
over $\F_q$ if and only if the conditions in Theorem \ref{REF} hold. Firstly, note that if $f(t)$ comes from a simple abelian variety $A$ and has a real root, then Lemma \ref{noreal} gives that $\dim(A)$ is $1$ or $2$, a contradiction. To check the second condition, we employ Newton Polygons.
Let $\mathcal{NP}(f)$ be the Newton Polygon of $f$. Then $\mathcal{NP}(f)$ has $14$ possible vertices given by the following coordinates:
\[
(0,7n), (1,6n+v_p(a_1)), (2, 5n+v_p(a_2)), (3, 4n+v_p(a_3)), (4, 3n+v_p(a_4)), (5, 2n+v_p(a_5)), (6, n+v_p(a_6)), 
\]
\[
(7, v_p(a_7)),(8, v_p(a_6)), (9,v_p(a_5)), (10, v_p(a_4)), (11, v_p(a_3)), (12, v_p(a_2)), (13, v_p(a_1)), (14, 0).
\]
They give rise to $31$ possible Newton Polygons, sandwiched between the two boundary cases of an ordinary simple abelian variety and a supersingular one.
Hayashida notes that if some of these points
are vertices, then the point must be a lattice point belonging to $\Z \times \Z/n\Z$ (\cite{hay1}). By symmetry of $\mathcal{NP}(f)$, it is sufficient to classify the possible Newton polygons according to whether either of
\[
(0,7n), (1,6n+v_p(a_1)), (2, 5n+v_p(a_2)), (3, 4n+v_p(a_3)), (4, 3n+v_p(a_4)),\]
\[(5, 2n+v_p(a_5)), (6, n+v_p(a_6)), (7, v_p(a_7)),
\]
is a vertex. The remainder of the proof involves detailing each of these $31$ possible polygons, which are depicted in red in the Figure~\ref{fig.all}. For clarity, we present the case where there is no vertex (i.e. the supersingular case), along with the one where the initial vertex is given by $(1,6n+v_p(a_1))$, noting that the other cases follow similarly.\\
\textbf{Case $0$.} Assume there is no vertex. Then
    $$\begin{cases}
        6n+v_p(a_1)\geq \frac{13}{2}n,\\
        5n+v_p(a_2)\geq 6n, \\
        4n+v_p(a_3)\geq \frac{11}{2}n, \\
        3n+v_p(a_4)\geq 5n, \\
        2n+v_p(a_5)\geq \frac{9}{2}n, \\
        n+v_p(a_6)\geq 4n, \\
        v_p(a_7)\geq \frac{7}{2}n,
    \end{cases}\Rightarrow \begin{cases}
        v_p(a_1)\geq \frac{1}{2}n, \\
        v_p(a_2)\geq n,\\
        v_p(a_3)\geq \frac{3}{2}n, \\
        v_p(a_4)\geq 2n, \\
        v_p(a_5)\geq \frac{5}{2}n, \\
        v_p(a_6)\geq 3n, \\
        v_p(a_7)\geq \frac{7}{2}n.
    \end{cases}$$
    In this case, we have
    $$(t-\alpha_1)\cdots(t-\alpha_{14})\in\mathbb{Q}_p[t]$$
    with
    \begin{align*}
        v_p(\alpha_1)&= \cdots = v_p(\alpha_{14})=\frac{1}{2}n.\\
    \end{align*}
    Thus Theorem \ref{REF} holds if and only if
    $f(t)$ has no root of valuation $\frac{1}{2}n$ nor irreducible factors of degree $3$, $5$, or $7$ in $\mathbb{Q}_p[t]$.
    
\textbf{Case 1.} Assume the first vertex is $(1,6n+v_p(a_1))$. In this case, the only possibility is $(1,6n+v_p(a_1))=(1,6n)$ and $v_p(a_1)=0$. Then we land in the following subcases.

\begin{enumerate}
    \item [\textbf{ 1.1}] Suppose that $(1,6n+v_p(a_1))=(1,6n)$ is the sole vertex. Then
    $$\begin{cases}
        5n+v_p(a_2)\geq \frac{11}{2}n, \\
        4n+v_p(a_3)\geq 5n, \\
        3n+v_p(a_4)\geq \frac{9}{2}n, \\
        2n+v_p(a_5)\geq 4n, \\
        n+v_p(a_6)\geq \frac{7}{2}n, \\
        v_p(a_7)\geq 3n,
    \end{cases}\Rightarrow \begin{cases}
        v_p(a_2)\geq \frac{1}{2}n, \\
        v_p(a_3)\geq n ,\\
        v_p(a_4)\geq \frac{3}{2}n, \\
        v_p(a_5)\geq 2n ,\\
        v_p(a_6)\geq \frac{5}{2}n, \\
        v_p(a_7)\geq 3n.
    \end{cases}$$
    In this case, we have
    $$t-\alpha_1,(t-\alpha_2)\cdots(t-\alpha_{13}),t-\alpha_{14}\in\mathbb{Q}_p[t]$$
    with
    \begin{align*}
        v_p(\alpha_1)&=n,\\
        v_p(\alpha_2)&= \cdots = v_p(\alpha_{13})=\frac{1}{2}n,\\
        v_p(\alpha_{14})&= 0.
    \end{align*}
    Thus Theorem \ref{REF} holds if and only if
    $$f(t)\text{ has no root of valuation }\frac{1}{2}n\text{ nor irreducible factors of degree }3\text{ or }5\text{ in }\mathbb{Q}_p[t].$$
    
    \item [\textbf{ 1.2}] Suppose that $(1,6n+v_p(a_1))=(1,6n)$ and $(4,3n+v_p(a_4))=(4,4n)$ are vertices. Then $v_p(a_4)=n$ and
    $$\begin{cases}
        5n+v_p(a_2)\geq \frac{16}{3}n, \\
        4n+v_p(a_3)\geq \frac{14}{3}n, \\
        2n+v_p(a_5)\geq \frac{7}{2}n, \\
        n+v_p(a_6)\geq 3n,\\
        v_p(a_7)\geq \frac{5}{2}n,
    \end{cases}\Rightarrow \begin{cases}
        v_p(a_2)\geq \frac{1}{3}n, \\
        v_p(a_3)\geq \frac{2}{3}n, \\
        v_p(a_5)\geq \frac{3}{2}n, \\
        v_p(a_6)\geq 2n ,\\
        v_p(a_7)\geq \frac{5}{2}n.
    \end{cases}$$
    In this case, we have
    $$t-\alpha_1,(t-\alpha_2)\cdots(t-\alpha_4),(t-\alpha_5)\cdots(t-\alpha_{10}),(t-\alpha_{11})\cdots(t-\alpha_{13}),t-\alpha_{14}\in\mathbb{Q}_p[t]$$
    with
    \begin{align*}
        v_p(\alpha_1)&=n,\\
        v_p(\alpha_2)&=\cdots=v_p(\alpha_4)=\frac{2}{3}n,\\
        v_p(\alpha_5)&=\cdots=v_p(\alpha_{10})=\frac{1}{2}n, \\
        v_p(\alpha_{11})&=\cdots=v_p(\alpha_{13})=\frac{1}{3}n,\\
        v_p(\alpha_{14})&=0.
    \end{align*}
    Thus Theorem \ref{REF} holds if and only if
    $$f(t)\text{ has no root of valuation }\frac{1}{3}n,\frac{1}{2}n\text{ or }\frac{2}{3}n\text{ and has exactly }2\text{ degree }3\text{ irreducible factors in }\mathbb{Q}_p[t].$$
    
    \item [\textbf{ 1.3}] Suppose that $(1,6n+v_p(a_1))=(1,6n)$ and $(5,2n+v_p(a_5))=(5,3n)$ are vertices. Then $v_p(a_5)=n$ and
    $$\begin{cases}
        5n+v_p(a_2)\geq \frac{21}{4}n, \\
        4n+v_p(a_3)\geq \frac{9}{2}n, \\
        3n+v_p(a_4)\geq \frac{15}{4}n, \\
        n+v_p(a_6)\geq \frac{5}{2}n,\\
        v_p(a_7)\geq 2n.
    \end{cases}\Rightarrow \begin{cases}
        v_p(a_2)\geq \frac{1}{4}n, \\
        v_p(a_3)\geq \frac{1}{2}n, \\
        v_p(a_4)\geq \frac{3}{4}n, \\
        v_p(a_6)\geq \frac{3}{2}n ,\\
        v_p(a_7)\geq 2n.
    \end{cases}$$
    In this case, we have
    $$t-\alpha_1,\:(t-\alpha_2)\cdots(t-\alpha_5),\:(t-\alpha_6)\cdots(t-\alpha_{9}),\:(t-\alpha_{10})\cdots(t-\alpha_{13}),\:t-\alpha_{14}\in\mathbb{Q}_p[t]$$
    with
    \begin{align*}
        v_p(\alpha_1)&=n,\\
        v_p(\alpha_2)&=\cdots=v_p(\alpha_5)=\frac{3}{4}n,\\
        v_p(\alpha_6)&=\cdots=v_p(\alpha_{9})=\frac{1}{2}n, \\
        v_p(\alpha_{10})&=\cdots=v_p(\alpha_{13})=\frac{1}{4}n,\\
        v_p(\alpha_{14})&=0.
    \end{align*}
    Thus Theorem \ref{REF} holds if and only if
    $$f(t)\text{ has no root of valuation }\frac{1}{4}n,\frac{1}{2}n\text{ or }\frac{3}{4}n\text{ and has at most }2\text{ degree }2\text{ irreducible factors in }\mathbb{Q}_p[t].$$

 \item [\textbf{ 1.4}] Suppose that $(1,6n+v_p(a_1))=(1,6n)$ and $(6,n+v_p(a_6))=(6,3n)$ are vertices. Then $v_p(a_6)=2n$ and
    $$\begin{cases}
        5n+v_p(a_2)\geq \frac{27}{5}n, \\
        4n+v_p(a_3)\geq \frac{24}{5}n, \\
        3n+v_p(a_4)\geq \frac{21}{5}n, \\
        2n+v_p(a_5)\geq \frac{18}{5}n,\\
        v_p(a_7)\geq \frac{5}{2}n,
    \end{cases}\Rightarrow \begin{cases}
        v_p(a_2)\geq \frac{2}{5}n, \\
        v_p(a_3)\geq \frac{4}{5}n, \\
        v_p(a_4)\geq \frac{6}{5}n, \\
        v_p(a_5)\geq \frac{8}{5}n, \\
        v_p(a_7)\geq \frac{5}{2}n.
    \end{cases}$$
    In this case, we have
    $$t-\alpha_1,\:(t-\alpha_2)\cdots(t-\alpha_6),\:(t-\alpha_7)(t-\alpha_8),\:(t-\alpha_9)\cdots(t-\alpha_{13}),\:t-\alpha_{14}\in\mathbb{Q}_p[t]$$
    with
    \begin{align*}
        v_p(\alpha_1)&=n,\\
        v_p(\alpha_2)&=\cdots=v_p(\alpha_6)=\frac{3}{5}n,\\
        v_p(\alpha_7)&=v_p(\alpha_{8})=\frac{1}{2}n, \\
        v_p(\alpha_{9})&=\cdots=v_p(\alpha_{13})=\frac{2}{5}n,\\
        v_p(\alpha_{14})&=0.
    \end{align*}
    Thus Theorem \ref{REF} holds if and only if
    $$f(t)\text{ has no root of valuation }\frac{2}{5}n,\frac{1}{2}n\text{ or }\frac{3}{5}n\text{ nor degree }3\text{ irreducible factors in }\mathbb{Q}_p[t].$$

    \item [\textbf{ 1.5}] Suppose that $(1,6n+v_p(a_1))=(1,6n)$ and $(6,n+v_p(a_6))=(6,2n)$ are vertices. Then $v_p(a_6)=n$ and
    $$\begin{cases}
        5n+v_p(a_2)\geq \frac{26}{5}n, \\
        4n+v_p(a_3)\geq \frac{22}{5}n, \\
        3n+v_p(a_4)\geq \frac{18}{5}n, \\
        2n+v_p(a_5)\geq \frac{14}{5}n,\\
        v_p(a_7)\geq \frac{3}{2}n,
    \end{cases}\Rightarrow \begin{cases}
        v_p(a_2)\geq \frac{1}{5}n, \\
        v_p(a_3)\geq \frac{2}{5}n, \\
        v_p(a_4)\geq \frac{3}{5}n, \\
        v_p(a_5)\geq \frac{4}{5}n, \\
        v_p(a_7)\geq \frac{3}{2}n.
    \end{cases}$$
    In this case, we have
    $$t-\alpha_1,\:(t-\alpha_2)\cdots(t-\alpha_6),\:(t-\alpha_7)(t-\alpha_8),\:(t-\alpha_9)\cdots(t-\alpha_{13}),\:t-\alpha_{14}\in\mathbb{Q}_p[t]$$
    with
    \begin{align*}
        v_p(\alpha_1)&=n,\\
        v_p(\alpha_2)&=\cdots=v_p(\alpha_6)=\frac{4}{5}n,\\
        v_p(\alpha_7)&=v_p(\alpha_{8})=\frac{1}{2}n, \\
        v_p(\alpha_{9})&=\cdots=v_p(\alpha_{13})=\frac{1}{5}n,\\
        v_p(\alpha_{14})&=0.
    \end{align*}
    Thus Theorem \ref{REF} holds if and only if
    $$f(t)\text{ has no root of valuation }\frac{1}{5}n,\frac{1}{2}n\text{ or }\frac{4}{5}n\text{ nor degree }3\text{ irreducible factors in }\mathbb{Q}_p[t].$$
 \item [\textbf{ 1.6}] Suppose that $(1,6n+v_p(a_1))=(1,6n)$ and $(7,v_p(a_7))=(7,2n)$ are vertices. Then $v_p(a_7)=2n$ and
    $$\begin{cases}
        5n+v_p(a_2)\geq \frac{16}{3}n, \\
        4n+v_p(a_3)\geq \frac{14}{3}n, \\
        3n+v_p(a_4)\geq 4n, \\
        2n+v_p(a_5)\geq \frac{10}{3}n,\\
        n+v_p(a_6)\geq \frac{8}{3}n,
    \end{cases}\Rightarrow \begin{cases}
        v_p(a_2)\geq \frac{1}{3}n, \\
        v_p(a_3)\geq \frac{2}{3}n, \\
        v_p(a_4)\geq n ,\\
        v_p(a_5)\geq \frac{4}{3}n, \\
        v_p(a_6)\geq \frac{5}{3}n.
    \end{cases}$$
    In this case, we have
    $$t-\alpha_1,\:(t-\alpha_2)\cdots(t-\alpha_7),\:(t-\alpha_8)\cdots(t-\alpha_{13}),\:t-\alpha_{14}\in\mathbb{Q}_p[t]$$
    with
    \begin{align*}
        v_p(\alpha_1)&=n,\\
        v_p(\alpha_2)&=\cdots=v_p(\alpha_7)=\frac{2}{3}n,\\
        v_p(\alpha_{8})&=\cdots=v_p(\alpha_{13})=\frac{1}{3}n,\\
        v_p(\alpha_{14})&=0.
    \end{align*}
    Thus Theorem \ref{REF} holds if and only if
    $$f(t)\text{ has no root of valuation }\frac{1}{3}n\text{ or }\frac{2}{3}n\text{ nor degree }2\text{ irreducible factors in }\mathbb{Q}_p[t].$$

    \item [\textbf{ 1.7}] Suppose that $(1,6n+v_p(a_1))=(1,6n)$ and $(7,v_p(a_7))=(7,n)$ are vertices. Then $v_p(a_7)=n$ and
    $$\begin{cases}
        5n+v_p(a_2)\geq \frac{31}{6}n, \\
        4n+v_p(a_3)\geq \frac{13}{3}n, \\
        3n+v_p(a_4)\geq \frac{7}{2}n, \\
        2n+v_p(a_5)\geq \frac{8}{3}n,\\
        n+v_p(a_6)\geq \frac{11}{6}n,
    \end{cases}\Rightarrow \begin{cases}
        v_p(a_2)\geq \frac{1}{6}n, \\
        v_p(a_3)\geq \frac{1}{3}n, \\
        v_p(a_4)\geq \frac{1}{2}n, \\
        v_p(a_5)\geq \frac{2}{3}n, \\
        v_p(a_6)\geq \frac{5}{6}n.
    \end{cases}$$
    In this case, we have
    $$t-\alpha_1,\:(t-\alpha_2)\cdots(t-\alpha_7),\:(t-\alpha_8)\cdots(t-\alpha_{13}),\:t-\alpha_{14}\in\mathbb{Q}_p[t]$$
    with
    \begin{align*}
       & v_p(\alpha_1)=n,\\
        &v_p(\alpha_2)=\cdots=v_p(\alpha_7)=\frac{5}{6}n,\\
        &v_p(\alpha_{8})=\cdots=v_p(\alpha_{13})=\frac{1}{6}n,\\
        &v_p(\alpha_{14})=0.
    \end{align*}
Thus Theorem \ref{REF} holds if and only if
    $$f(t)\text{ has no root of valuation $\frac{1}{6}n$ or $\frac{5}{6}n$ nor irreducible factors of degree $2$ or $3$ in }\mathbb{Q}_p[t].$$ 
\end{enumerate}
We do a similar analysis whenever one of $(2, 5n+v_p(a_2)), (3, 4n+v_p(a_3)), (4, 3n+v_p(a_4)), (5, 2n+v_p(a_5)), (6, n+v_p(a_6)), (7, v_p(a_7))$ is a starting vertex, and we get the Newton Polygons depicted in Figure~\ref{fig.all}. The information on $v_p(a_i)$ that one reads from these Newton Polygons together with Theorem~\ref{REF} gives the conclusion.
\end{proof}

\begin{remark}
    We remark that the same method would classify all Weil polynomials coming from characteristic polynomials of Frobenius for a simple abelian variety $A$ of dimension $p\geq 5$, for $p$ a prime (see Corollary~\ref{cor:prime}). 
\end{remark}
\begin{figure}
    \centering
    \includegraphics[width=0.4\textwidth]{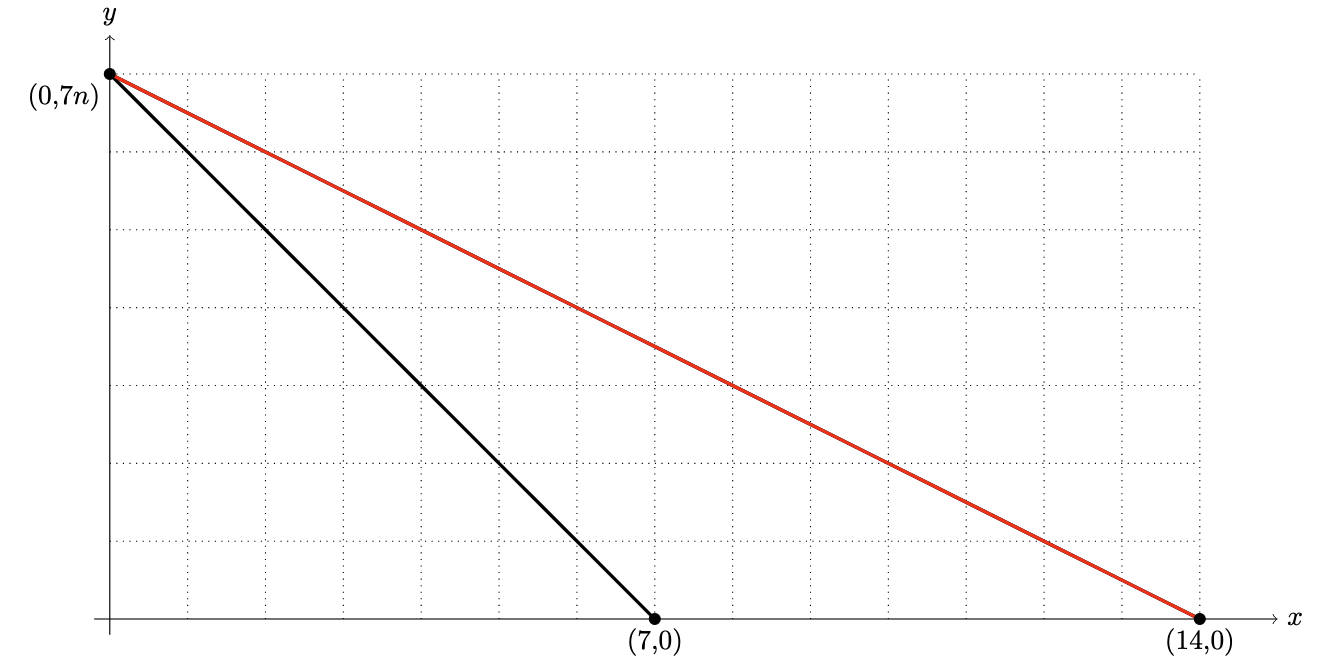}
    \includegraphics[width=0.4\textwidth]{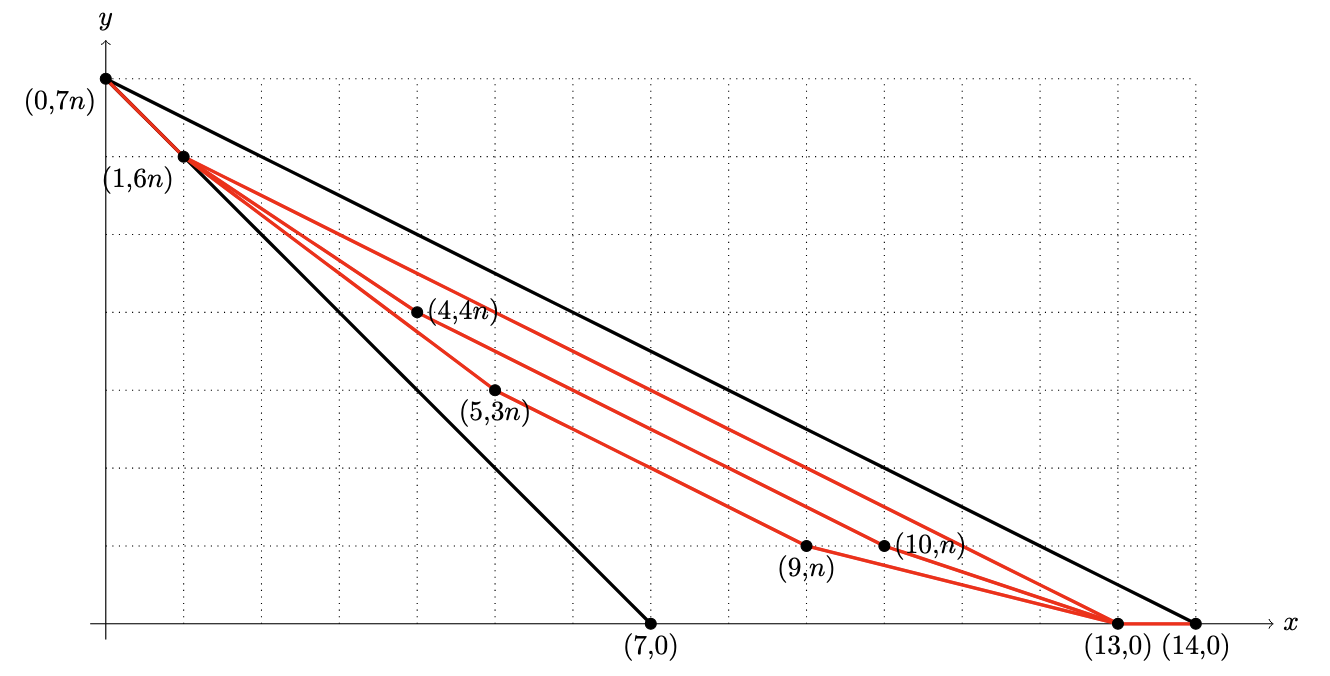}
    \includegraphics[width=0.4\textwidth]{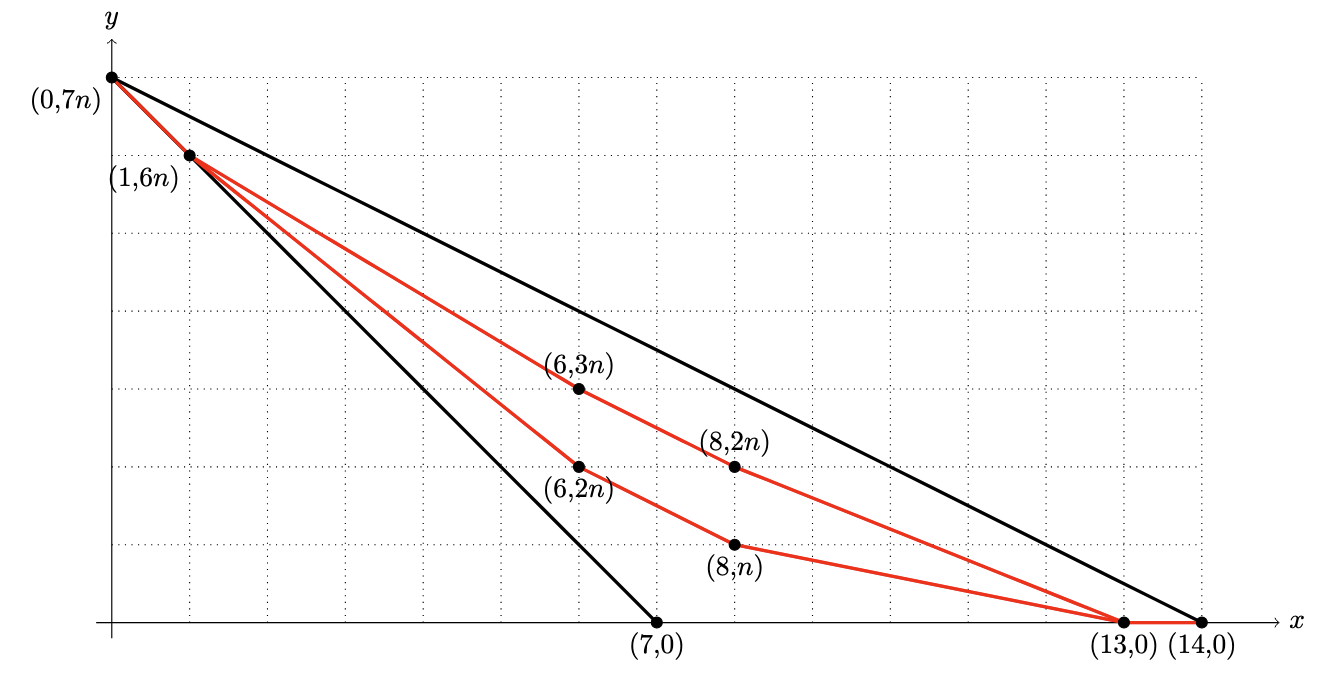}
    \includegraphics[width=0.4\textwidth]{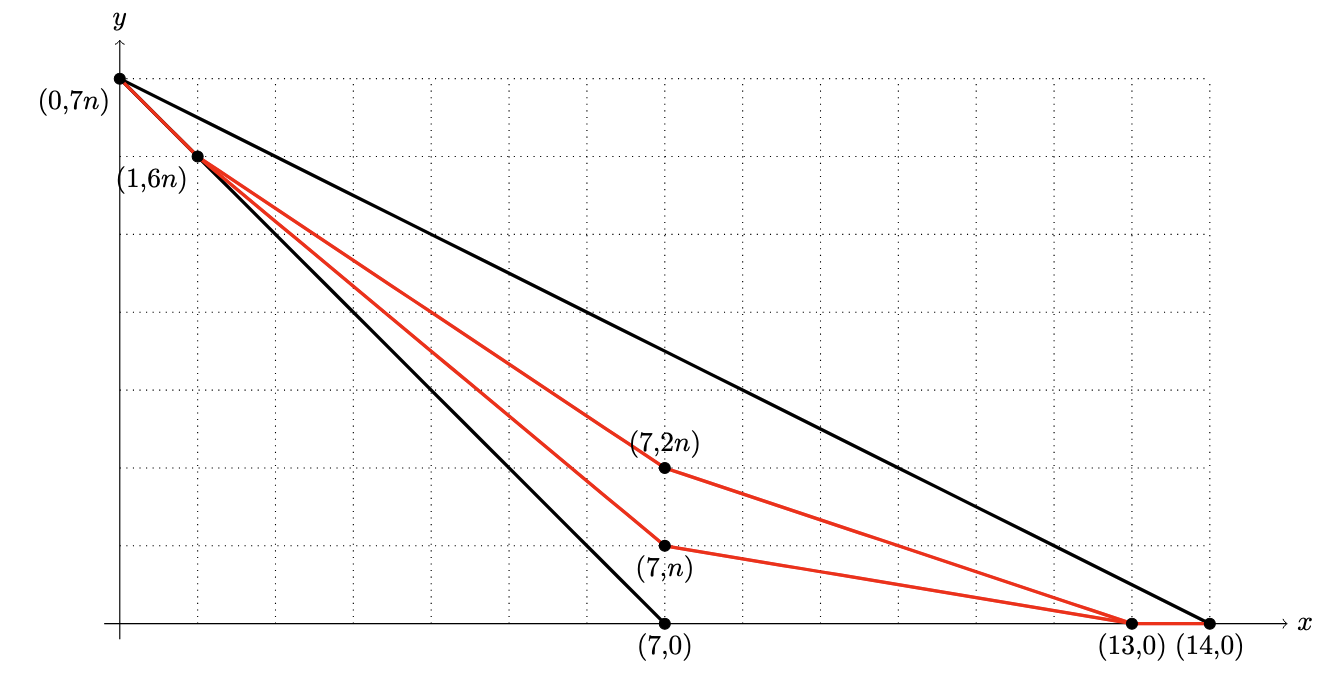}
    \includegraphics[width=0.4\textwidth]{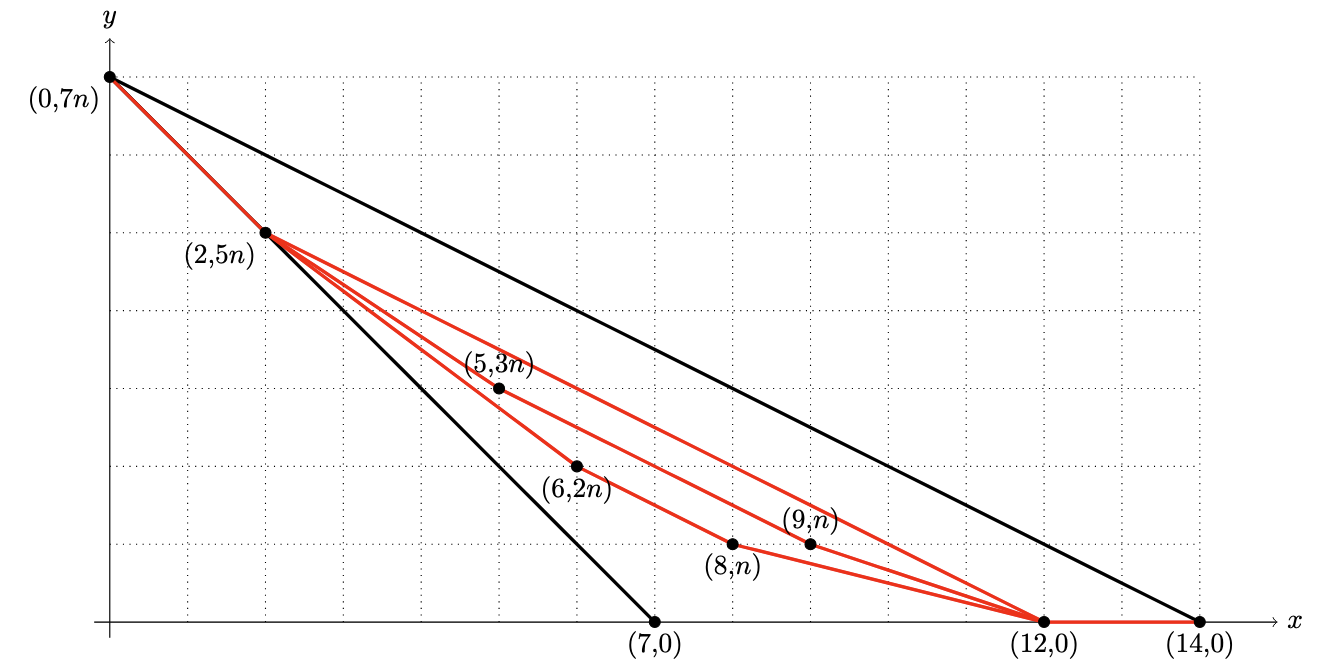}
    \includegraphics[width=0.4\textwidth]{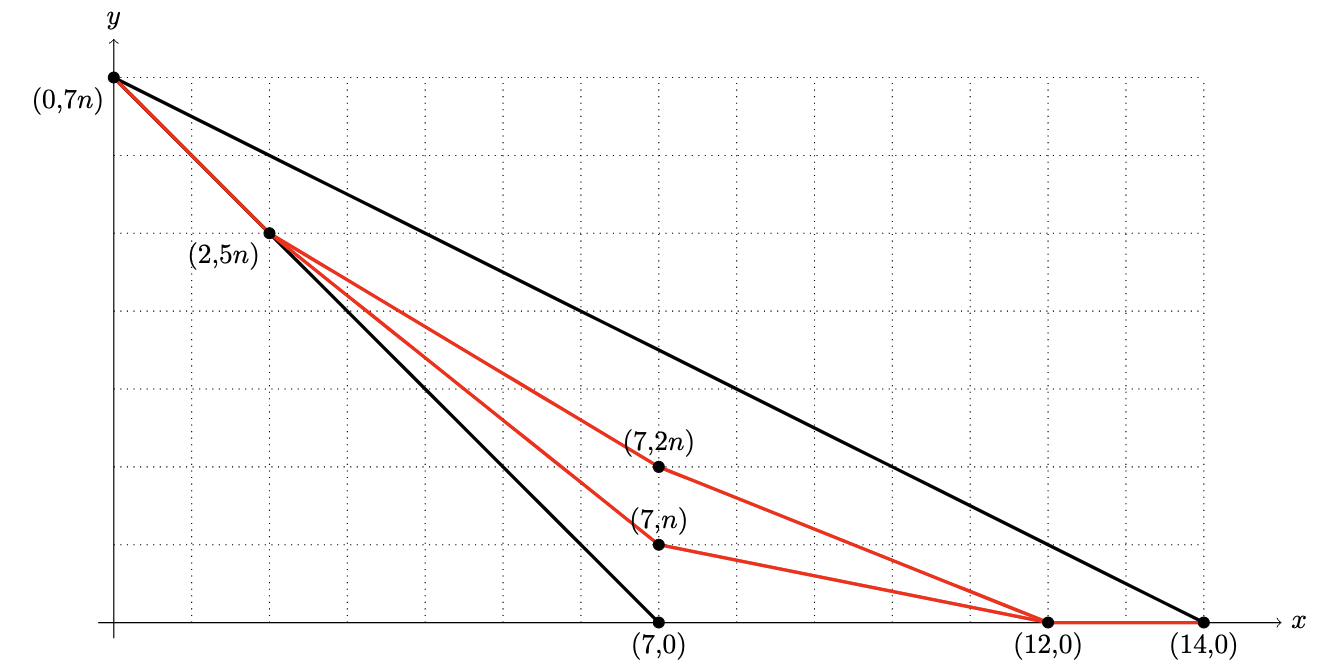}
    \includegraphics[width=0.4\textwidth]{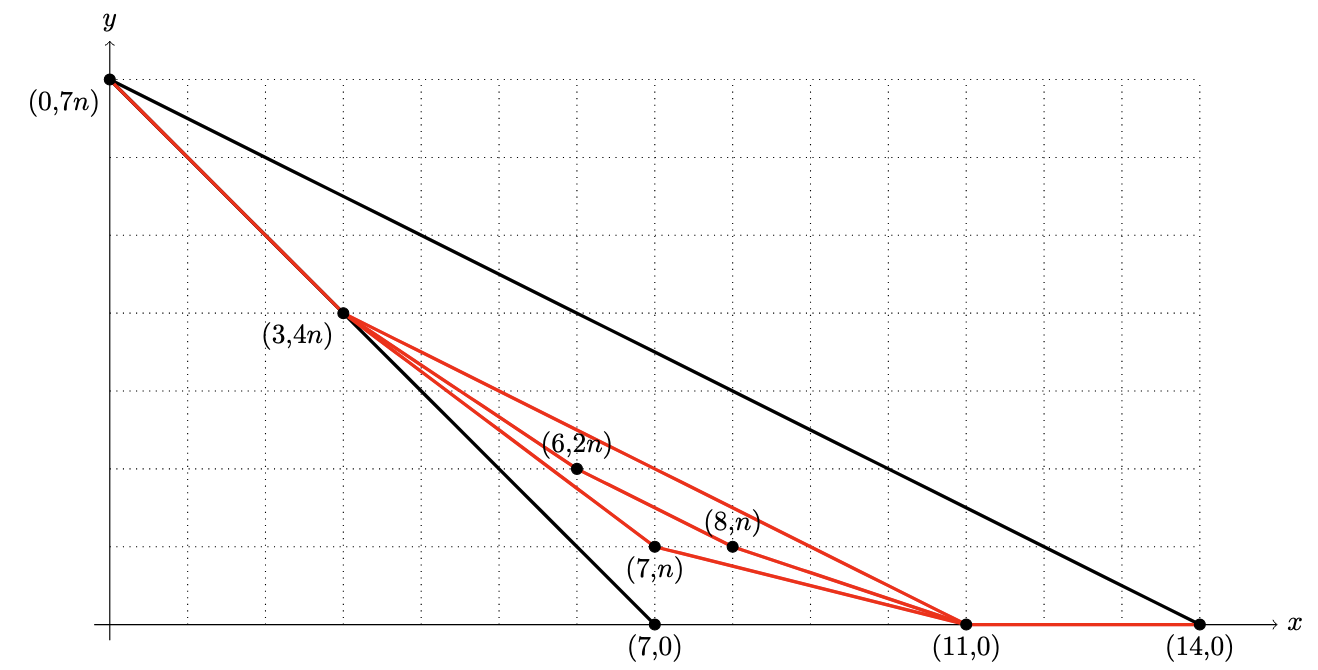}
    \includegraphics[width=0.4\textwidth]{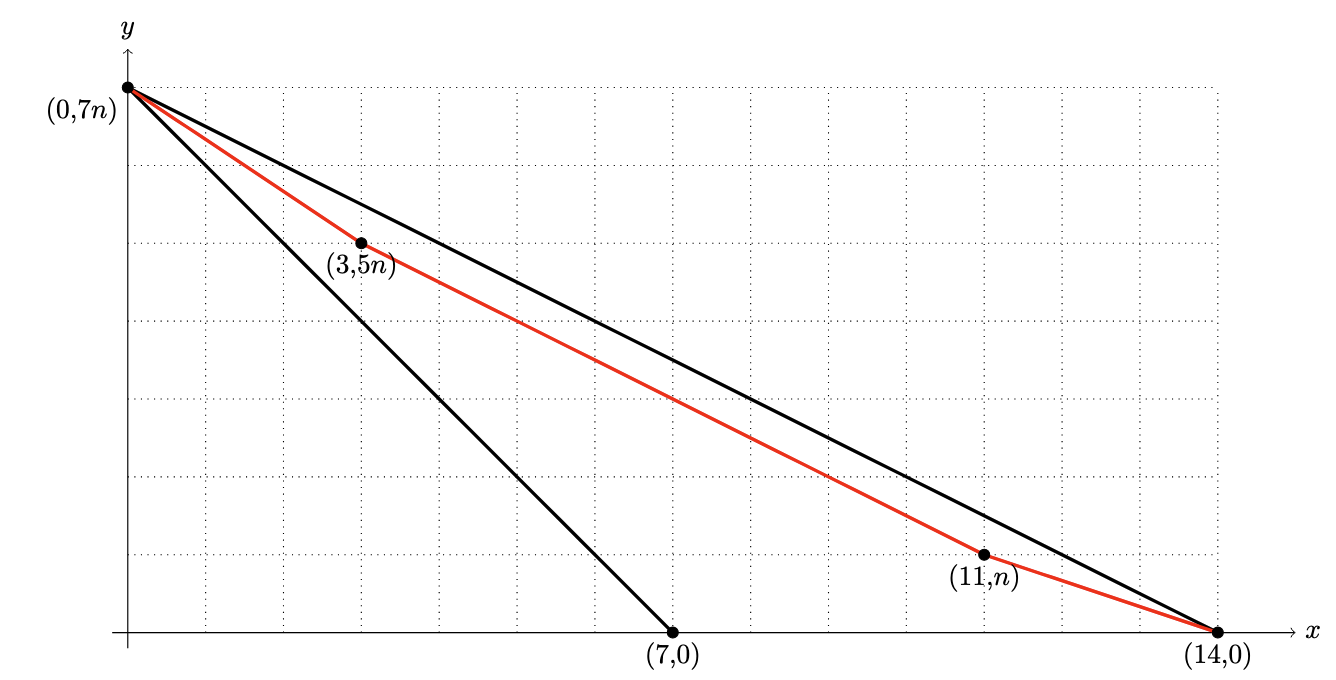}
    \includegraphics[width=0.4\textwidth]{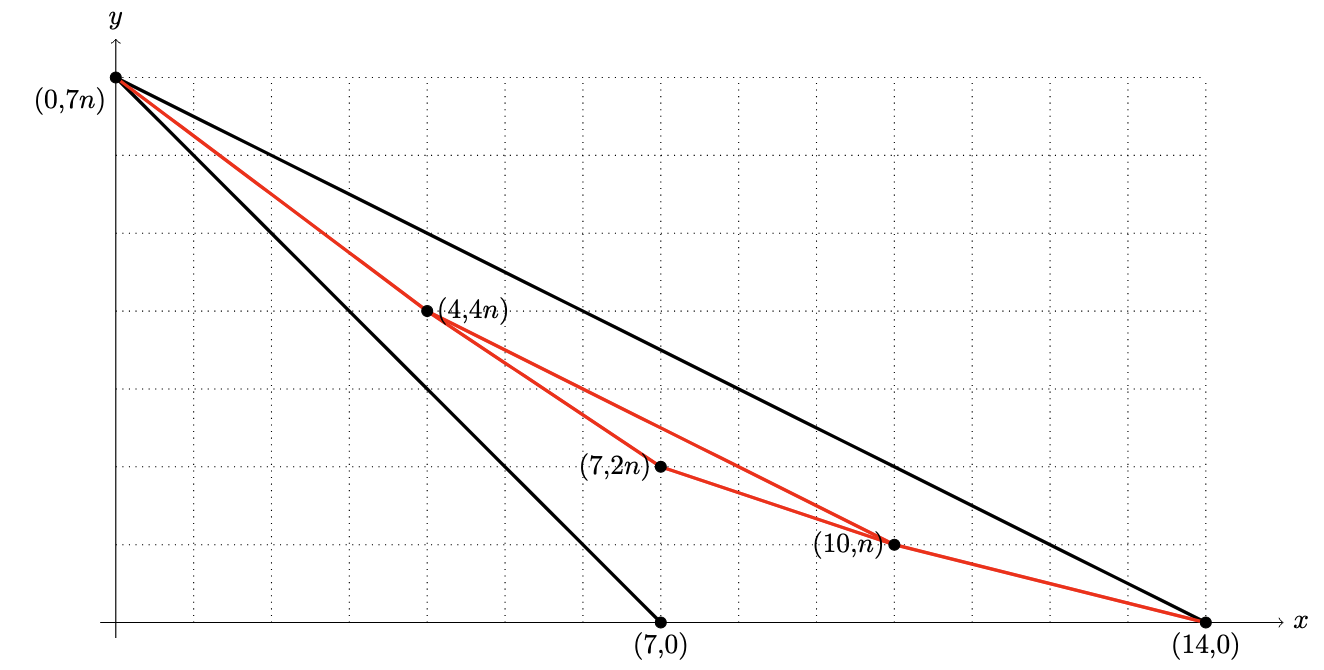}
    \includegraphics[width=0.4\textwidth]{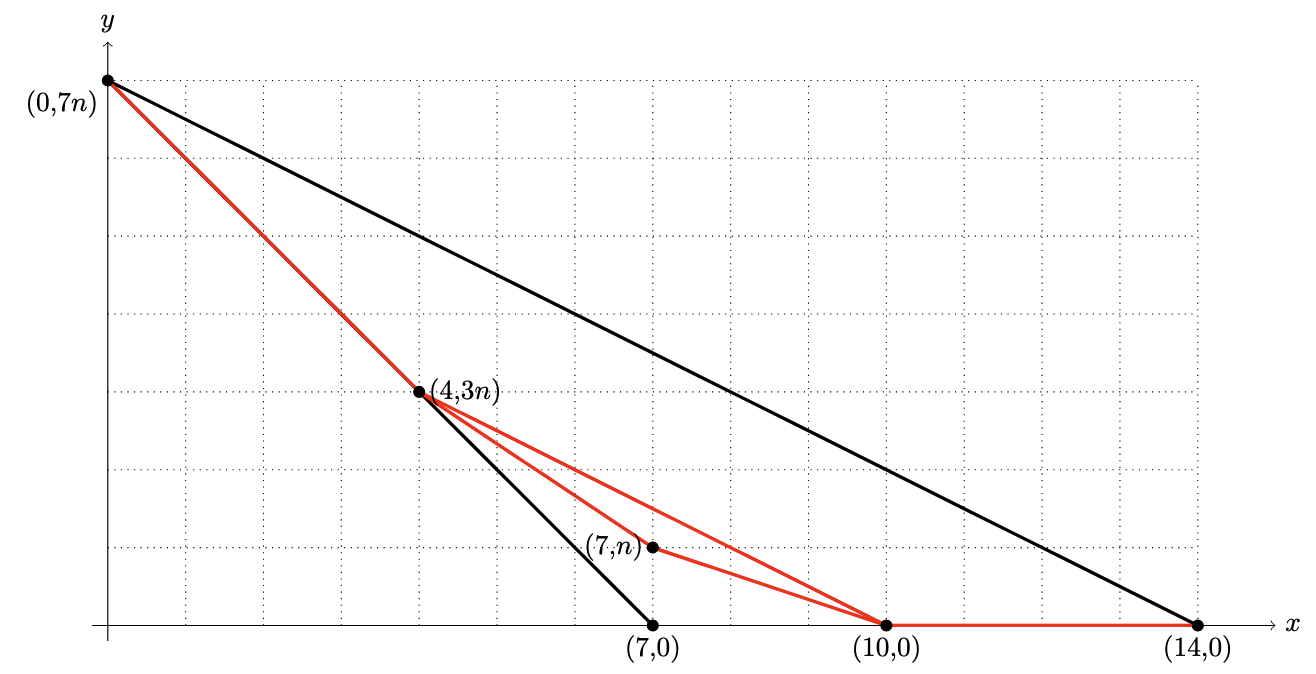}
    \includegraphics[width=0.4\textwidth]{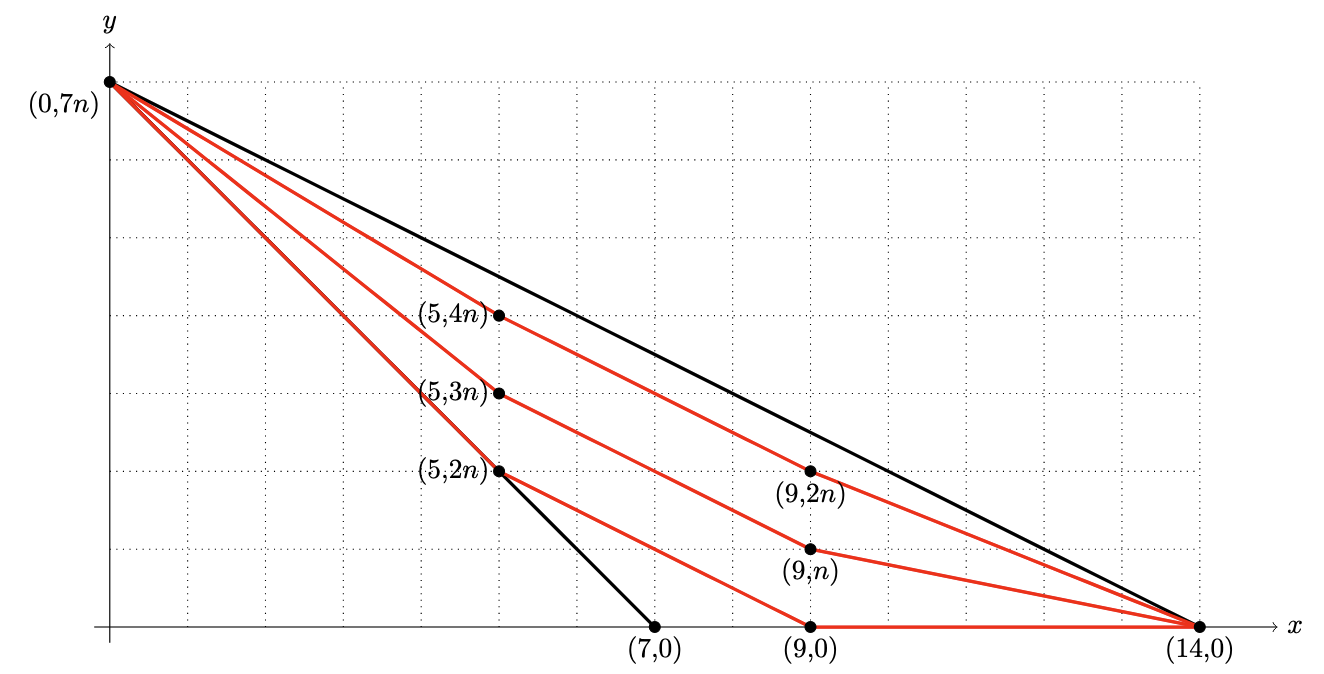}
    \includegraphics[width=0.4\textwidth]{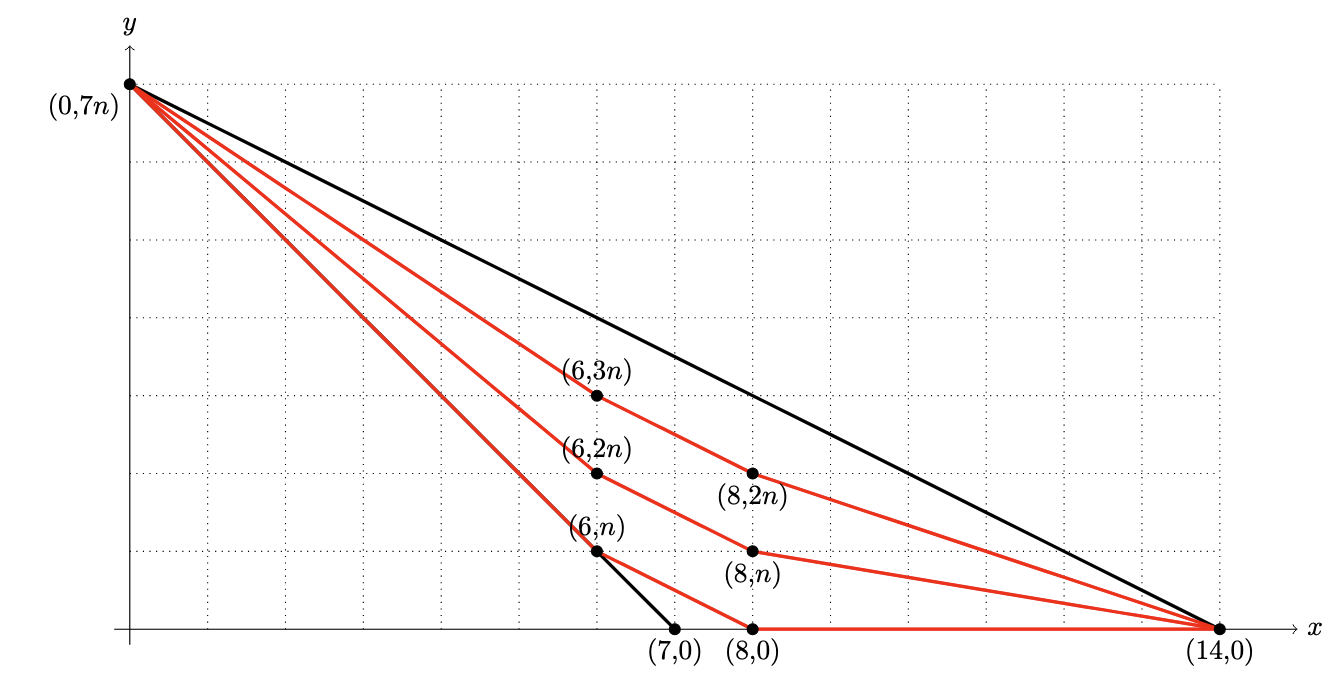}
    \includegraphics[width=0.4\textwidth]{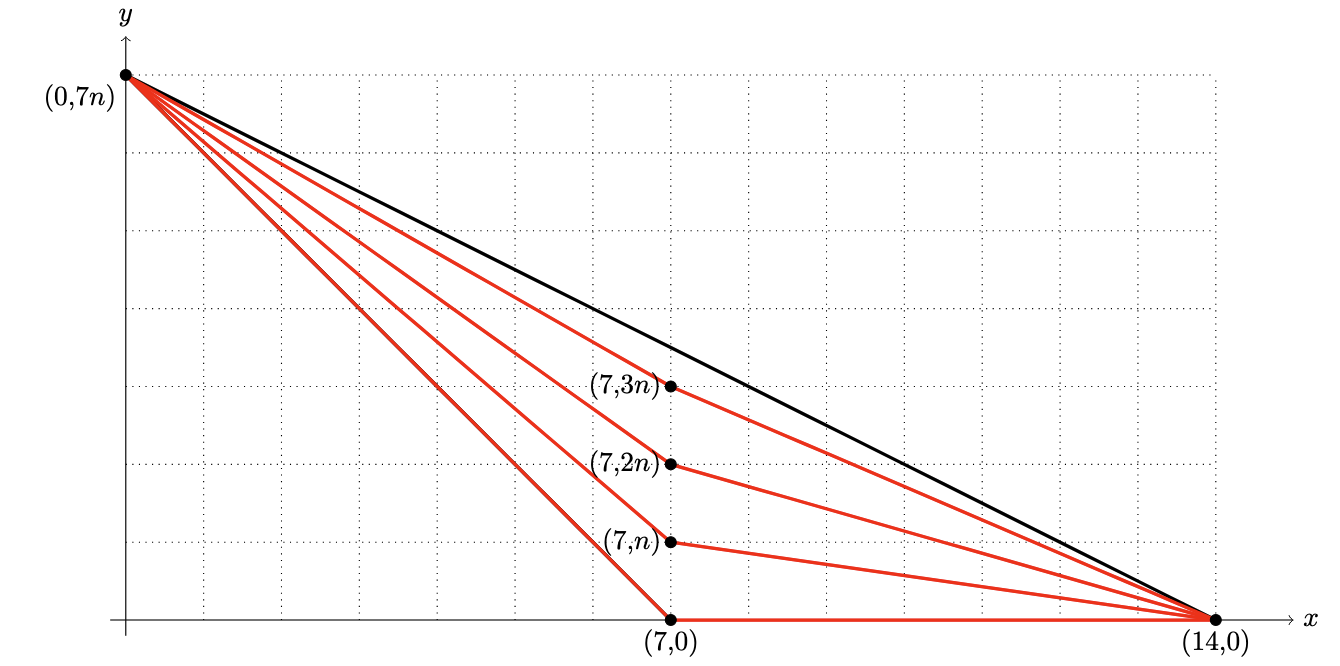}
    \caption{Newton Polygons ordered from left to right, top to bottom, by the starting vertex.}
    \label{fig:cycledata}
\end{figure}\label{fig.all}

\newpage
\bibliographystyle{amsalpha}
    \bibliography{bibliography} 

\end{document}